\numberwithin{equation}{section}
\newcommand{\remove}[1]{}
\begin{document}
\newtheorem{theorem}{Theorem}[section]
\newtheorem{lemma}[theorem]{Lemma}
\newtheorem{sublemma}[theorem]{Sub-lemma}
\newtheorem{definition}[theorem]{Definition}
\newtheorem{conjecture}[theorem]{Conjecture}
\newtheorem{proposition}[theorem]{Proposition}
\newtheorem{claim}[theorem]{Claim}
\newtheorem{algorithm}[theorem]{Algorithm}
\newtheorem{corollary}[theorem]{Corollary}
\newtheorem{observation}[theorem]{Observation}
\newtheorem{problem}[theorem]{Open Problem}
\newcommand{\R}{{\mathbb R}}
\newcommand{\N}{{\mathbb N}}
\newcommand{\Z}{{\mathbb Z}}
\newcommand\eps{\varepsilon}
\newcommand{\E}{\mathbb E}
\newcommand{\Prob}{\mathbb{P}}
\newcommand{\pl}{\textrm{C}}
\newcommand{\dang}{\textrm{dang}}
\renewcommand{\labelenumi}{(\roman{enumi})}
\newcommand{\bc}{\bar c}
\newcommand{\cal}[1]{\mathcal{#1}}
\newcommand{\G}{{\cal G}}
\newcommand{\Hc}{{\cal H}}
\newcommand{\Gnd}{\G_{n,d}}
\newcommand{\Gnp}{\G(n,p)}
\renewcommand{\P}{{\cal P}}
\newcommand{\la}{\lambda}
\newcommand{\floor}[1]{\lfloor #1 \rfloor}

\newcommand{\bel}[1]{\be\label{#1}}
\newcommand{\ee}{\end{equation}}
\newcommand{\be}{\begin{equation}}
 \newcommand\eqn[1]{(\ref{#1})}
 \newcommand{\ex}{\E}

\newcommand{\aas}{{a.a.s.}}
\newcommand{\wO}{\widetilde O}
\newcommand{\accessconst}{\gammaconst}
\newcommand{\gammaconst}{9}

\newcommand{\Aconst}{a} 
\newcommand{\Bconst}{b} 
\newcommand{\hatU}{\widehat U}
\newcommand{\Bin}{{\rm Bin}}
\newcommand{\tildeU}{{\widetilde U}}

\newcommand{\scr}{\mathcal}
\newcommand{\mb}{\mathbb}
\newcommand{\til}{\widetilde}

\newcommand{\mil}{\mathit}

\newcommand{\opt}{\textup{opt}}
\newcommand{\lmax}{\textup{lmax}}
\newcommand{\pred}{\textup{pred}}

\newcommand{\Avg}{\text{Avg}}

\newcommand{\schur}{\textup{schur}}

\newcommand{\Enc}{\textup{Enc}}
\newcommand{\Dec}{\textup{Dec}}
\newcommand{\mPi}{\mathit{\Pi}}
\newcommand{\gam}{\gamma}
\newcommand{\Gam}{\Gamma}

\title{Probabilistic Zero Forcing on Random Graphs} 

\author{Sean English}
\address{Department of Mathematics, University of Illinois Urbana-Champaign, Champaign, Il}
\email{\texttt{SEnglish@Illinois.edu}}

\author{Pawe\l{} Pra\l{}at}
\address{Department of Mathematics, Ryerson University, Toronto, ON, Canada}
\email{\texttt{pralat@ryerson.ca}}

\author{Calum MacRury}
\address{Department of Computer Science, University of Toronto, Toronto, ON, Canada}
\email{\tt cmacrury@cs.toronto.edu}

\keywords{random graphs, zero forcing, probabilistic zero forcing}

\maketitle

\begin{abstract}
Zero forcing is a deterministic iterative graph coloring process in which vertices are colored either blue or white, and in every round, any blue vertices that have a single white neighbor force these white vertices to become blue. Here we study probabilistic zero forcing, where blue vertices have a non-zero probability of forcing each white neighbor to become blue. 

We explore the propagation time for probabilistic zero forcing on the Erd\H{o}s-R\'eyni random graph $\Gnp$ when we start with a single vertex colored blue. We show that when $p=\log^{-o(1)}n$, then with high probability it takes $(1+o(1))\log_2\log_2n$ rounds for all the vertices in $\Gnp$ to become blue, and when $\log n/n\ll p\leq \log^{-O(1)}n$, then with high probability it takes $\Theta(\log(1/p))$ rounds.
\end{abstract}

\section{Introduction\label{intro}}

Zero forcing is an iterative graph coloring procedure which can model certain real world propagation and search processes such as rumor  spreading. Given a graph $G$ and a set of marked, or blue, vertices $Z\subseteq G$, the process of zero forcing involves the application of the \emph{zero forcing color change rule} in which a blue vertex $u$ forces a non-blue (white) vertex $v$ to become blue if $N(u)\setminus Z=\{v\}$, that is, $u$ forces $v$ to become blue if $v$ is the only white neighbor of $u$.

We say that $Z$ is a \emph{zero forcing set} if when starting with $Z$ as the set of initially blue vertices, after iteratively applying the zero forcing color change rule until no more vertices can force, the entire vertex set of $G$ becomes blue. Note that the order in which forces happen is arbitrary since if $u$ is in a position in which it can force $v$, this property will not be destroyed if other vertices are turned blue. As a result, we may process vertices sequentially (in any order) or all vertices that are ready to turn blue can do so simultaneously. The \emph{zero forcing number}, denoted $z(G)$, is the cardinality of the smallest zero forcing set of $G$.

Zero forcing has sparked a lot of interest recently. Some work has been done on calculating or bounding the zero forcing number for specific structures such as graph products \cite{AIM2008}, graphs with large girth \cite{DK2015} and random graphs \cite{BBEMP2018,KKB2019}, while others have studied variants of zero forcing such as connected zero forcing \cite{BH2017} or positive semi-definite zero forcing \cite{BBFHHSVV2010}. 

In the present paper we will be mainly concerned  with a parameter associated with zero forcing known as the \emph{propagation time}, which is the fewest number of rounds necessary for a zero forcing set of size $z(G)$ to turn the entire graph blue. More formally, given a graph $G$ and a zero forcing set $Z$, we generate a finite sequence of sets  $Z_0\subsetneq Z_1\subsetneq \dots\subsetneq Z_t$, where $Z_0=Z$, $Z_t=V(G)$, and given $Z_i$, we define $Z_{i+1}=Z_i\cup Y_i$, where $Y_i\subseteq V(G)\setminus Z_i$ is the set of white vertices that can be forced in the next round if $Z_i$ is the set of the blue vertices. Then the propagation time of $Z$, denoted $pt(G,Z)$, is defined to be $t$. The propagation time of the graph $G$ is then given by $pt(G)=\min_{Z} pt(G,Z)$, where the minimum is taken over all zero forcing sets $Z$ of cardinality $z(G)$. Propagation time for zero forcing has been studied in \cite{HHKMWY2012}.

\subsection{Probabilistic zero forcing}

Zero forcing was initially formulated to bound a problem in linear algebra known as the min-rank problem \cite{AIM2008}. In addition to this application to mathematics, zero forcing also models many real-world propagation processes. One specific application of zero forcing could be to rumor spreading, but the deterministic nature of zero forcing may not be able to fit the chaotic nature of real-life situations. As such, probabilistic zero forcing has also been proposed and studied where blue vertices have a non-zero probability of forcing white neighbors, even if there is more than one white neighbor. More specifically, given a graph $G$, a set of blue vertices $Z$, and vertices $u\in Z$ and $v\in V(G)\setminus Z$ such that $uv\in E(G)$, in a given time step, vertex $u$ will force vertex $v$ to become blue with probability
\[
\mb{P}(u\text{ forces }v)=\frac{|N[u]\cap Z|}{\deg(u)}.
\]

In a given round, each blue vertex will attempt to force each white neighbor independently. If this happens, we may say that the edge $uv$ is forced. A vertex becomes blue as long as it is forced by at least one blue neighbor, or in other words if at least one edge incident with it is forced. Note that if $v$ is the only white neighbor of $u$, then with probability $1$, $u$ forces $v$, so given an initial set of blue vertices, the set of vertices forced via probabilistic zero forcing is always a superset of the set of vertices forced by traditional zero forcing. In this sense, probabilistic zero forcing and traditional zero forcing can be coupled. In the context of rumor spreading, the probabilistic color change rule captures the idea that someone is more likely to spread a rumor if many of their friends have already heard the rumor.

Under probabilistic zero forcing, given a connected graph, it is clear that starting with any non-empty subset of blue vertices will with probability 1 eventually turn the entire graph blue, so the zero forcing number of a graph is not an interesting parameter to study for probabilistic zero forcing. Initially in \cite{KY2013}, the authors studied a parameter which quantifies how likely it is for a subset of vertices to become a traditional zero forcing set the first timestep that it theoretically could under probabilistic zero forcing.

Instead, in this paper, we will be concerned with a parameter that generalizes the zero forcing propagation time. This generalization was first introduced in \cite{GH2018}. Given a graph $G$, and a set $Z\subseteq V(G)$, let $pt_{pzf}(G,Z)$ be the random variable that outputs the propagation time when probabilistic zero forcing is run with the initial blue set $Z$. For ease of notation, we will write $pt_{pzf}(G,v)=pt_{pzf}(G,\{v\})$. The propagation time for the graph $G$ will be defined as the random variable $pt_{pzf}(G)=\min_{v\in V(G)}pt_{pzf}(G,v)$. More specifically, $pt_{pzf}(G)$ is a random variable for the experiment in which $n$ iterations of probabilistic zero forcing are performed independently, one for each vertex of $G$, then the minimum is taken over the propagation times for these $n$ independent iterations.

It is worth mentioning here that probabilistic zero forcing is closely related to the well-studied \emph{push} and \emph{pull} models in theoretical computer science for rumor spreading. In \emph{push}, you start with an infected set of nodes, and at each time step, each infected node chooses a neighbor independently and uniformly at random, and infects the neighbor, if the neighbor is not already infected. Similarly in \emph{pull}, at each time step, each uninfected node chooses a neighbor uniformly at random, it becomes infected if the chosen neighbor was already infected. Finally, the two models can be combined, which is denoted \emph{push\&pull}, where at each time step infected vertices choose a random neighbor to try to infect, and uninfected vertices choose a random neighbor to try to become infected. Similarly to probabilistic zero forcing, the main parameter of interest is the propagation time (or runtime) of these processes. The main differences between \emph{push} and \emph{pull}, and probabilistic zero forcing is that in probabilistic zero forcing, a vertex can force more than one of its neighbors to become blue at every stage, and the probability that a specific blue vertex forces a specific white neighbor increases as more neighbors of the blue vertex become blue. For more information on \emph{push}, \emph{pull} and \emph{push\&pull}, see \cite{DPR2019, MP2016}.

In \cite{GH2018}, the authors studied probabilistic zero forcing, and more specifically the expected propagation time for many specific structures. A summary of this work is provided in the following theorem.  We write $f=O(g)$ if there exists some absolute constant $c$ such that $f
\leq cg$, $f=\Omega(g)$ if $g=O(f)$, and $f=\Theta(g)$ if $f=O(g)$ and $f=\Omega(g)$.
\begin{theorem}{\cite{GH2018}}
Let $n>2$. Then
\begin{itemize}
    \item $\min_{v\in V(P_n)}\E(pt_{pzf}(P_n,v))=\begin{cases}n/2+2/3&\text{ if }n\text{ is even}\\
    n/2+1/2&\text{ if }n\text{ is odd},
    \end{cases}$
    \item$\min_{v\in V(C_n)}\E(pt_{pzf}(C_n,v))=\begin{cases}n/2+1/3&\text{ if }n\text{ is even}\\
    n/2+1/2&\text{ if }n\text{ is odd},
    \end{cases}$
    \item$\min_{v\in V(K_{1,n})}\E(pt_{pzf}(K_{1,n},v))=\Theta(\log n)$,
    \item$\Omega(\log\log n)=\min_{v\in V(K_n)}\E(pt_{pzf}(K_n,v))=O(\log n)$.
\end{itemize}
\end{theorem}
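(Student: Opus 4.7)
The four sub-cases rely on different mechanisms, so I would treat them separately.

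For the \emph{path $P_n$ and cycle $C_n$}, the key observation is a deterministic propagation phenomenon: once a maximal blue interval has length at least two, the frontier vertex $u$ is an interior vertex with $\deg(u)=2$ and $|N[u]\cap Z|=2$, so $u$ forces its unique white neighbour with probability exactly $1$. Hence, after an initial \emph{startup} phase, the blue region expands deterministically at rate one per round in each direction. First I would argue that the optimal starting vertex lies at (or next to) the centre, since any other choice wastes steps on the shorter side. Then, from an interior degree-$2$ starting vertex $v$, the startup is governed by a tiny Markov chain on $\{0,1,2\}$ (the number of blue neighbours of $v$) whose transition probabilities depend on $|N[v]\cap Z|$, and a short calculation yields the expected startup length, which supplies the four additive constants $2/3,\,1/2,\,1/3,\,1/2$; the small discrepancy between path and cycle comes from how the two fronts collide on the antipode of $C_n$, which depends on the parity of $n$.

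For the \emph{star $K_{1,n}$}, starting at a leaf costs only one extra deterministic round, so it suffices to start at the hub. Let $B_t$ denote the number of blue leaves; conditional on $B_t=k$, each of the $n-k$ remaining leaves is forced independently with probability $(k+1)/n$, so $B_{t+1}-B_t\sim \Bin(n-k,(k+1)/n)$. The upper bound splits into three phases: an ignition phase ($B_t=O(1)$), a doubling phase ($1\ll B_t\ll n$) in which $\E[B_{t+1}\mid B_t]\approx 2B_t$, and a mop-up phase ($n-B_t=o(n)$) in which every remaining leaf is forced with probability tending to $1$ per round. Each phase takes $O(\log n)$ rounds in expectation, with Chernoff concentration in the doubling phase ensuring that the process tracks its mean. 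The matching $\Omega(\log n)$ bound follows because $\E[B_{t+1}\mid B_t]\le 2B_t$, so $B_t$ cannot reach $n/2$ in expectation before round $\log_2 n$.

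For the \emph{complete graph $K_n$}, by symmetry any starting vertex is optimal. Let $B_t$ be the total number of blue vertices; each of the $n-B_t$ white vertices has $B_t$ blue neighbours, each forcing it independently with probability $B_t/(n-1)$, so it remains white with probability $(1-B_t/(n-1))^{B_t}$. When $B_t=o(\sqrt n)$ this equals $1-B_t^2/(n-1)+o(B_t^2/n)$, and hence $\E[B_{t+1}\mid B_t]=B_t+\Theta(B_t^2)$: a roughly squaring step, which forces $\Omega(\log_2\log_2 n)$ rounds just to pass $\sqrt n$. For the $O(\log n)$ upper bound, once $B_t\ge \sqrt n$ the probability that a given white vertex survives a round is bounded away from $1$, so a Chernoff argument shrinks the white set by a constant factor per round, finishing in $O(\log n)$ additional rounds. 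The hardest part throughout is obtaining the exact additive constants in the path and cycle sub-cases: this is a careful finite computation that depends delicately on the parity of $n$ and on whether both fronts reach their endpoints in the same round, rather than an asymptotic estimate.
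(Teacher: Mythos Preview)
This theorem is quoted from \cite{GH2018} as background; the present paper does not supply a proof, so there is nothing to compare your proposal against. Your sketch is broadly sound as an outline of how the cited paper proceeds: the deterministic-after-startup behaviour on $P_n$ and $C_n$, the binomial recursion $B_{t+1}-B_t\sim\Bin(n-k,(k+1)/n)$ on $K_{1,n}$, and the squaring heuristic $\E[B_{t+1}\mid B_t]\approx B_t+\Theta(B_t^2)$ on $K_n$ are all the right mechanisms. One caution on the path/cycle constants: once one neighbour of the initial vertex $v$ turns blue, $v$ itself now has $|N[v]\cap Z|=2=\deg(v)$ and so forces its \emph{other} neighbour with probability $1$ in the very next round, so the ``startup Markov chain on $\{0,1,2\}$'' collapses faster than you suggest; getting the exact additive constants $2/3,\,1/2,\,1/3,\,1/2$ requires tracking this carefully together with the parity-dependent collision at the far end, and your description does not yet pin this down.
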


Recently, in \cite{CCGHLOR2019}, the authors used tools developed for Markov chains to analyze the expected propagation time for many small graphs. The authors also showed, in addition to other things, that $\min_{v\in V(K_n)}\E(pt_{pzf}(K_n,v))=\Theta(\log\log n)$ and for any connected graph $G$, $\min_{v\in V(G)}\E(pt_{pzf}(G,v))=O(n)$. This result was then improved very recently in~\cite{NS2019}, where the authors showed that 
$$
\log_2\log_2(n)\leq \min_{v\in V(G)}\E(pt_{pzf}(G,v))\leq \frac{n}2+o(n)
$$ 
for general connected graphs $G$.

The result of most interest to us is from \cite{GH2018}, where in addition to the results mentioned above, the authors also considered the \emph{binomial random graph} $\Gnp$. More precisely, $\Gnp$  is a distribution over the class of graphs with vertex set $[n]$ in which every pair $\{i,j\} \in \binom{[n]}{2}$ appears independently as an edge in $G$ with probability~$p$. Note that $p=p(n)$ may (and usually does) tend to zero as $n$ tends to infinity. We say that $\Gnp$ has some property \emph{asymptotically almost surely} or $\aas$ if the probability that $\Gnp$ has this property tends to $1$ as $n$ goes to infinity.

\begin{theorem}{\cite{GH2018}}
Let $0<p<1$ be constant. Then \aas~we have that
\[
\min_{v\in V(\Gnp)}\E(pt_{pzf}(\Gnp,v))=O((\log n)^2).
\]
\end{theorem}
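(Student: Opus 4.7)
The approach is to isolate a high-probability regularity event for $\Gnp$, condition on it, and then bound the expected propagation time from a fixed starting vertex $v_0$; this immediately upper bounds the minimum over $v$. First I would verify that $\Gnp$ \aas{} satisfies: (i) every vertex has degree $(1+o(1))pn$, and (ii) for every set $S\subseteq V(\Gnp)$ with $|S|\leq n/2$, the number of edges from $S$ to its complement is at least $\tfrac{p}{2}|S|(n-|S|)$. Both are standard applications of Chernoff and the union bound (for (ii) one uses a union bound over all $S$ of each dyadic size, absorbing the $\binom{n}{|S|}$ factor using $p=\Theta(1)$). I then condition on such a realization $G$, fix a vertex $v_0$, and let $Z_t$ denote the blue set at time $t$ of the probabilistic zero forcing process started at $\{v_0\}$.

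The main technical ingredient is a doubling lemma: if $|Z_t|=k\leq n/2$, then for some $T=O(\log n)$, $\Prob(|Z_{t+T}|\geq 2k \mid Z_t)\geq 1-n^{-3}$. The starting observation is that for any white vertex $v$ with $d_v$ blue neighbors, the probability that $v$ is forced in one round is at least $1-\bigl(1-\tfrac{1}{(1+o(1))pn}\bigr)^{d_v}$; summing over $v$ and using (i)--(ii) to control $\sum_v d_v\asymp pk(n-k)$ gives $\E[|Z_{t+1}\setminus Z_t|\mid Z_t]\geq c\,k$ for an absolute constant $c>0$. Crucially, the Bernoulli forcing attempts on distinct (blue, white) edges are mutually independent, and so the indicators $\mathbf{1}\{v\text{ is forced}\}$ for distinct white $v$ depend on disjoint sets of those Bernoullis and hence are independent. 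Therefore when $k\geq\log n$, a Chernoff bound gives $|Z_{t+1}|\geq(1+c/2)k$ with probability $1-n^{-\omega(1)}$, so doubling happens in $O(1)$ rounds. When $k<\log n$, a cruder argument (using $(1-1/(pn))^{(n-k)p}$ is bounded away from $1$) shows that the probability of gaining at least one new blue vertex per round is bounded away from $0$; hence in $O(\log n)$ rounds one reaches $|Z_t|\geq\log n$ and enters the previous regime.

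Iterating the doubling lemma $O(\log n)$ times gives $|Z_t|\geq n/2$ in $T^{\ast}=O((\log n)^2)$ rounds with probability $1-o(1/n)$. A symmetric ``co-doubling'' argument---tracking the shrinkage of the white set by a constant factor per $O(\log n)$ rounds, using the same edge-counting bound (ii) with the roles of $S$ and $V\setminus S$ swapped---finishes the process in another $O((\log n)^2)$ rounds. The conversion from the high-probability bound to a bound on the expectation is then routine: using the worst-case bound $\E(pt_{pzf}(G,v_0))=O(n)$ from \cite{NS2019} as a backstop on the rare failure event, one concludes $\E(pt_{pzf}(G,v_0))\leq T^{\ast}+O(n)\cdot o(1/n)=O((\log n)^2)$. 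The main obstacle is the small-$k$ regime, where concentration fails and the worst-case ``ignition'' time contributes the extra logarithmic factor compared with what one might hope to prove; improving the bound below $(\log n)^2$ would require a more delicate analysis of this early phase, which the later sections of the paper presumably carry out.
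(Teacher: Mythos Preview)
This theorem is cited from \cite{GH2018} and is not proved in the present paper, so there is no proof here to compare against directly. Your sketch is a valid route to the $O((\log n)^2)$ bound: the a.a.s.\ edge-expansion over all cuts, the independence of the events $\{v\text{ forced}\}$ across white $v$ (these events depend on disjoint families of edge-Bernoullis), and the resulting Chernoff-driven multiplicative growth once $|Z_t|\gtrsim\log n$ all go through for constant $p$. Two small remarks. You need not invoke the $O(n)$ bound of \cite{NS2019} as a backstop: since your doubling lemma fails with probability $n^{-3}$ and is applied $O(\log n)$ times, the total failure probability is $o(n^{-2})$, and the trivial estimate $\mathbb{E}[pt_{pzf}(G,v_0)]\le n^2$ (valid on any connected graph, since at each round some white vertex is forced with probability at least $1/n$) already absorbs the tail. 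Also, the co-doubling endgame when $|W|<\log n$ is easy rather than delicate: each blue $u$ then has at most $|W|$ white neighbors and so forces each with probability at least $1-|W|/\delta(G)=1-O(\log n/n)$, while every white vertex has $\Theta(n)$ such neighbors, so one further round finishes with probability $1-o(n^{-2})$.

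By contrast, the paper's own upper bound (Theorem~\ref{thm:upper_bound}) is much sharper and rests on a mechanism your argument does not use: once the blue set $B$ is moderately large, a typical $u\in B$ has $\sim|B|p$ blue neighbors and hence forces each white neighbor with probability $\sim|B|/n$ rather than $\sim1/(pn)$; exploiting this (Phase~4 of that proof) makes $|B|$ essentially square at each step, which is what drives the bound down to $\log_2\log_2 n$ for constant $p$. Your closing diagnosis is therefore slightly off: the obstacle to improving beyond $(\log n)^2$ is not the small-$k$ ignition phase (the paper dispatches that in $o(\log\log n)$ rounds in Phase~1) but the fact that mere doubling is too slow once $k$ is large---one needs the squaring step.
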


In addition to this result, the authors in \cite{NS2019} conjectured that for the random graph, a.a.s.\ $\min_{v\in V(\Gnp)}\E(pt_{pzf}(\Gnp,v))=(1+o(1))\log\log n$. 
The main purpose of the current work is to explore probabilistic zero forcing on $\Gnp$ in more detail. Instead of considering the expectation of the propagation time, we will calculate bounds on the propagation time that \aas~hold. We will write $f=o(g)$ or $f\ll g$ if $f/g\to 0$ in the limit, and $f\gg g$ if $g\ll f$. We will write $f\sim g$ if $f=(1+o(1))g$. Our main result is as follows:

\begin{theorem}\label{thm:main}
Suppose that $p = p(n)$ is such that $pn \gg \log n$. Then, for each vertex $v\in V(\Gnp)$, we have that a.a.s.
\begin{align*}
pt_{pzf}(\Gnp,v) &\le (1+o(1)) \Big (\log_2 \log_2 n + \log_3 (1/p) \Big), \qquad \text{ and }\\
pt_{pzf}(\Gnp,v) &\ge (1+o(1)) \max \Big (\log_2 \log_2 n , \log_{4} (1/p) \Big). 
\end{align*}
In particular, if $p = \log^{-o(1)} n$ (for example if $p$ is a constant), then a.a.s.\ 
$$pt_{pzf}(\Gnp,v) \sim \log_2 \log_2 n.$$
On the other hand, if $\log n / n \ll p \le \log^{- O(1)} n$, then a.a.s. 
$$pt_{pzf}(\Gnp,v) = \Theta( \log (1/p) ).$$
\end{theorem}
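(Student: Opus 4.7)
I would prove the upper and lower bounds separately, and in both directions exploit the fact that the blue set $Z_t$ evolves in qualitatively distinct phases.  Throughout, condition on the \aas-event that every vertex of $\Gnp$ has degree $(1+o(1))np$ and that small vertex subsets span roughly the expected number of $\Gnp$-edges; these hold whenever $pn \gg \log n$.

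For the upper bound, track $k_t := |Z_t|$ through three phases.  In the \emph{start-up} phase, while $k_t = o(1/p)$, random $\Gnp$-edges inside $Z_t$ are rare but every prior forcing deposits an edge there, so typically $\sum_{u \in Z_t}|N[u]\cap Z_t| \ge (3-o(1))k_t$; combined with the fact that each $u \in Z_t$ has $(1-o(1))\deg(u)$ white neighbours, each forced with probability $|N[u]\cap Z_t|/\deg(u)$, a Chernoff bound yields $k_{t+1} \ge (3-o(1))k_t$ w.h.p., so this phase finishes in $(1+o(1))\log_3(1/p)$ rounds.  In the \emph{squaring} phase ($k_t \gg 1/p$), each white vertex has $\sim k_t p$ blue neighbours each firing with probability $\sim k_t/n$, so it remains white with probability $\approx \exp(-k_t^2 p/n)$; standard concentration then gives $k_{t+1} \sim \min(k_t^2 p, n)$, producing doubly exponential growth over $(1+o(1))\log_2\log_2(np)$ rounds.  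A final \emph{clean-up} union bound polishes off the last white vertices in $o(\log\log n)$ additional rounds.

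For the lower bound, the starting point is the deterministic inequality
\[
\E[k_{t+1}\mid Z_t] \;\le\; k_t + \sum_{u\in Z_t}|N[u]\cap Z_t| \;=\; 2k_t + 2\,e(Z_t),
\]
obtained by summing the expected number of forces made by each blue vertex.  Using $e(Z_t)\le \binom{k_t}{2}$ gives $\E[k_{t+1}\mid Z_t]\le (1+o(1))k_t^2$, and a Markov-type concentration argument then forbids $k_t$ from reaching $n$ in fewer than $(1-o(1))\log_2\log_2 n$ rounds, yielding the $\log_2\log_2 n$ half of the bound.  For the $\log_4(1/p)$ half, refine the bound while $k_t = o(1/p)$: $e(Z_t)$ decomposes into forcing edges (at most $k_t-1$) and random $\Gnp$-edges inside $Z_t$ (expectation $\binom{k_t}{2}p = o(k_t)$, concentrated by Chernoff), so $e(Z_t) \le (1+o(1))k_t$ w.h.p.\ and $\E[k_{t+1}\mid Z_t]\le (4+o(1))k_t$; a further Chernoff step gives $k_{t+1} \le (4+o(1))k_t$ w.h.p.\ throughout the start-up regime, so reaching $k_t \ge 1/p$ requires at least $(1-o(1))\log_4(1/p)$ rounds.

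The main technical obstacle is that $Z_t$ is \emph{not} a uniform random subset of $V(\Gnp)$: it is biased toward being self-adjacent through the history of forces, so clean recurrences require a two-round exposure argument---revealing edges of $\Gnp$ only as they are queried by the forcing process---together with a coupling against an idealised Galton--Watson-like branching process.  Extracting the precise constants $3$ and $4$ in the two directions also demands delicate Chernoff concentration that correctly handles the correlations among forces directed at a common white vertex, and careful handling of the boundary between the start-up and squaring regimes so that no $o(1)$ loss accumulates at the transition.
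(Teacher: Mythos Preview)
Your phase structure and target recursions match the paper's: factor-$3$ geometric growth up to size $\Theta(1/p)$, then squaring, for the upper bound; factor-$4$ growth, then squaring, for the lower bound.  Two gaps need attention, though.  First, both of your lower-bound arguments start from the single vertex $v$ and rely on ``Markov-type'' or Chernoff concentration round by round; this fails while $k_t = O(1)$, and the error probabilities do not sum over $\Theta(\log(1/p))$ rounds.  The paper's fix is to invoke the monotone coupling (adding blue vertices can only speed things up) to \emph{enlarge} the initial blue set to $\omega \log n / p$ in the dense regime and to $(1/p)^{C_1}$ in the sparse regime before beginning the analysis, so that every relevant random variable is large enough for Chernoff from the outset.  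A symmetric issue affects your upper bound: starting from one vertex, the first few rounds add only $O(1)$ new vertices in expectation, so Chernoff does not give $k_{t+1}\ge (3-o(1))k_t$ there; the paper spends a separate $o(\log\log n)$-round warm-up phase to reach $k_t \to \infty$ before invoking the factor-$3$ recursion.  (Also note that your sum $\sum_u |N[u]\cap Z_t|$ counts \emph{forces}, which upper-bounds rather than lower-bounds the number of newly blue vertices; the paper instead lower-bounds $\mathbb{P}(w\text{ forced})\ge 2/d$ for each white $w\in N(Z_t)$ and uses $|N(Z_t)|\sim dk_t$.)

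Second, for the sparse lower bound you correctly identify the decomposition of $e(Z_t)$ into $\le k_t-1$ forcing edges plus ``random'' extras, but your proposed two-round exposure does not by itself make the extras behave like fresh $\mathrm{Bernoulli}(p)$ variables: conditioning on $Y_{i+1}$ biases the non-forcing edges from $Y_{\le i}$ to $Y_{i+1}$, and the bias for each pair $(u,v)$ depends on $\deg(u)$, which is itself random and entangled with the process.  The paper resolves this by passing to an \emph{alternative forcing process} in which $u$ forces each white neighbour with probability $\min\{1,\deg_{Z_t}[u]/\tilde d_L\}$ for a \emph{deterministic} $\tilde d_L\sim np$; this decouples the forcing rule from the unexposed degree of $u$ and yields the clean stochastic domination $e(Y_{\le i},Y_{i+1}) \preceq |Y_{i+1}| + \mathrm{Bin}(|Y_{\le i}||Y_{i+1}|,p)$ (their Corollary~\ref{cor:forcing_independence}).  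With that device in hand, the paper tracks the average internal degree of the blue set via a recursion showing it stays at $2+o(1)$, which is equivalent to your $e(Z_t)\le (1+o(1))k_t$ and feeds directly into $k_{t+1}\le (4+o(1))k_t$.
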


\subsection{Notation}

We will use the notation $N(v)$ and $N[v]$ to denote the open and closed neighborhoods of the vertex $v$, respectively. Given a set $S\subset V(G)$, we write $N(S)$ for $\left(\bigcup_{v\in S} N(v)\right)\setminus S$. Given two disjoint sets of vertices, $A,B\subseteq V(G)$, we will use $E(A,B)$ to denote the edges with one endpoint in $A$ and one endpoint in $B$, and $e(A,B):=|E(A,B)|$. Similarly, we will write $E(A)$ for the set of edges with both endpoints in $A$, while $e(A):=|E(A)|$.

As mentioned earlier, given two functions $f=f(n)$ and $g=g(n)$, we will write $f=O(g)$ if there exists an absolute constant $c$ such that $f
\leq cg$ for all $n$, $f=\Omega(g)$ if $g=O(f)$, $f=\Theta(g)$ if $f=O(g)$ and $f=\Omega(g)$, and we write $f=o(g)$ or $f\ll g$ if the limit $\lim_{n\to\infty} f/g=0$. In addition, we write $f=\omega(g)$ or $f\gg g$ if $g=o(f)$, and unless otherwise specified, $\omega$ will denote an arbitrarily function that is $\omega(1)$, assumed to grow slowly. We also will write $f\sim g$ if $f=(1+o(1))g$. Through the paper, all logarithms with no subscript denoting the base will be taken to be natural. Finally, as typical in the field of random graphs, for expressions that clearly have to be an integer, we round up or down but do not specify which: the choice of which does not affect the argument.

\section{Probabilistic preliminaries}

In this section we give a few preliminary results that will be useful for the proof of our main result. First, we state a specific instance of Chernoff's bound that we will find useful, then we mention some specific expansion properties that $\Gnp$ has, and finally we mention some helpful coupling results specific to probabilistic zero forcing that we will use in the proof of our main result.







\subsection{Concentration inequalities}

Throughout the paper, we will be using the following concentration inequality. Let $X \in \textrm{Bin}(n,p)$ be a random variable with the binomial distribution with parameters $n$ and $p$. Then, a consequence of Chernoff's bound (see e.g.~\cite[Corollary~2.3]{JLR}) is that 
\begin{equation}\label{chern}
\Prob( |X-\E X| \ge \eps \E X) ) \le 2\exp \left( - \frac {\eps^2 \E X}{3} \right)  
\end{equation}
for  $0 < \eps < 3/2$. Moreover, let us mention that the bound holds for the general case in which $X=\sum_{i=1}^n X_i$ and $X_i \in \textrm{Bernoulli}(p_i)$ with (possibly) different $p_i$ (again, e.g.~see~\cite{JLR} for more details).

\subsection{Expansion properties}

In this paper, we focus on dense random graphs, that is, graphs with average degree $d = p(n-1) \gg \log n$. Such dense random graphs will have some useful expansion properties that hold \aas

\begin{theorem}\label{thm:expansion}
Let $\omega=\omega(n)$ be any function that tends to infinity as $n \to \infty$. Suppose that $d=p(n-1)\ge \omega \log n$. Let $G=(V,E) \in \Gnp$. Then \aas\ the following property holds. Any set $S \subseteq V(G_n)$ of cardinality $s=|S| \leq n/(d \omega)$ satisfies
\[
| N(S) | = sd (1+O(\omega^{-1/2})) \sim sd.
\]
In particular, we get that $\Delta(G) = d (1+O(\omega^{-1/2})) \sim d$ and $\delta(G) = d (1+O(\omega^{-1/2})) \sim d$. 
\end{theorem}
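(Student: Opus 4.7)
The plan is to run a first-moment-plus-Chernoff computation for one fixed $S$ and then take a union bound over all candidate sets. First I would fix $S \subseteq V$ of size $s \le n/(d\omega)$. For each $v \notin S$, let $X_v$ be the indicator that $v$ has at least one neighbour in $S$, so $|N(S)| = \sum_{v \notin S} X_v$. Because the $X_v$ depend on disjoint edge-slots (each $v$ exposes its own $s$ potential edges to $S$, and different $v$'s expose different edges), they are independent, and each $X_v \sim \text{Bernoulli}(1-(1-p)^s)$. Since $s \le n/(d\omega)$ forces $sp = O(1/\omega) = o(1)$, a two-term Taylor expansion gives $1-(1-p)^s = sp\bigl(1 + O(1/\omega)\bigr)$, hence
\[
\E|N(S)| \;=\; (n-s)\bigl(1-(1-p)^s\bigr) \;=\; sp(n-s)\bigl(1+O(1/\omega)\bigr) \;=\; sd\bigl(1+O(1/\omega)\bigr).
\]

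Next I would apply \eqref{chern} with $\eps = C\omega^{-1/2}$ for a sufficiently large absolute constant $C$ (note $\eps \to 0$, so the hypothesis $\eps < 3/2$ is met). Using $d \ge \omega \log n$, this yields
\[
\Prob\Bigl(\bigl| |N(S)| - \E|N(S)| \bigr| \ge \eps\, \E|N(S)|\Bigr) \;\le\; 2\exp\!\left(-\tfrac{C^2}{3}\cdot \tfrac{sd}{\omega}\bigl(1+o(1)\bigr)\right) \;\le\; 2\,n^{-(C^2/3)\,s\,(1+o(1))}.
\]
A union bound over the $\binom{n}{s}\le n^s$ sets of size $s$, followed by a sum over $s \in \{1,\ldots,\lfloor n/(d\omega)\rfloor\}$, bounds the total failure probability by $\sum_{s\ge 1} 2\,n^{-(C^2/3 - 1)\,s\,(1+o(1))} = o(1)$, provided $C > \sqrt{3}$ (I would take $C=3$ for concreteness). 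This establishes $|N(S)| = sd\bigl(1+O(\omega^{-1/2})\bigr)$ \aas, uniformly in $S$. The claims $\Delta(G) \sim d$ and $\delta(G) \sim d$ drop out as the $s=1$ case, i.e.\ simultaneous concentration of all vertex degrees.

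The main thing to watch is the tension between the Chernoff deviation and the union-bound cost: $\eps$ must be small enough to yield the target relative error $O(\omega^{-1/2})$, yet $\eps^2 \E|N(S)|/3$ must dominate $\log\binom{n}{s}$ uniformly over $1 \le s \le n/(d\omega)$. The tightest instance is small $s$ (in particular $s = 1$, where the union bound costs a factor $n$): one must kill $\Theta(s\log n)$ with a Chernoff exponent of order $s d/\omega \ge s \log n$. Fortunately the hypothesis $d\ge \omega \log n$ leaves the constant $C$ free, so any sufficiently large absolute $C$ closes the argument while keeping the error $O(\omega^{-1/2})$ as stated. No other step presents a real obstacle.
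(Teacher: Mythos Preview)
Your proposal is correct and takes essentially the same approach as the paper: compute $\E|N(S)| = sd(1+O(\omega^{-1}))$ via a Taylor expansion, apply Chernoff with $\eps$ a constant multiple of $\omega^{-1/2}$, and union-bound over all $s\le n/(d\omega)$ and all $\binom{n}{s}$ sets. The paper uses $\eps = 2/\sqrt{\omega}$ (so your $C=2$) and handles the tail sum with a slightly different split, but the argument is otherwise identical.
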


\begin{proof} 
Let $S \subseteq V$, $s=|S|$, and consider the random variable $X = X(S) = |N(S)|$.  We will bound $X$ from above and below in a stochastic sense. There are two things that need to be estimated: the expected value of $X$, and the concentration of $X$ around its expectation. 

It is clear that  
\begin{eqnarray*}
\E X &=& \left( 1 - \left(1- \frac {d}{n-1} \right)^s \right) (n-s) \\
&=& \left( 1 - \exp \left( - \frac {ds}{n} (1+O(d/n)) \right) \right) (n-s) \\
&=& \frac {ds}{n} (1+O(ds/n)) (n-s) \\
&=& ds (1+O(\omega^{-1})).
\end{eqnarray*}

We next use Chernoff's bound, Equation \eqref{chern}, which implies that the expected number of sets $S$ that have $\big| |N(S)| - d|S| \big| > \eps d|S|$ and $|S| \le n/(d\omega)$ is, for $\eps = 2/{\sqrt{\omega}}$, at most 
\begin{align*}
\sum_{s=1}^{n/(d\omega)} 2n^s \exp \left( - \frac {\eps^2 s d}{3+o(1)} \right) 
&=\sum_{s=1}^{n/(d\omega)} 2n^s \exp \left( - \frac {4 s \log n}{3+o(1)} \right)\\
&\leq \sum_{s=1}^{n/(d\omega)} 2n^{-s/(3+o(1))}\\
&\leq 3\cdot2n^{-1/(3+o(1))}+\sum_{s=4}^{n/(d\omega)} 2n^{-s/(3+o(1))}\\
&\leq 6n^{-1/(3+o(1))}+\frac{n}{d\omega}2n^{-4/(3+o(1))}= o(1).
\end{align*}
It follows immediately from Markov's inequality that \aas\ if $ |S| \le n/(d\omega)$, then 
$$
|N(S)| = d |S| (1+O(\omega^{-1/2})),
$$ 
where the constant implicit in $O()$  does not depend on the choice of $S$.  
\end{proof}

\subsection{Useful coupling}

Before we state the lemma, let us recall a standard, but very useful proof technique in probability theory that allows one to compare two random variables or two random processes. Consider two biased coins, the first with probability $p$ of turning up heads and the second with probability $q > p$ of turning up heads. For any fixed $k$, the probability that the first coin produces at least $k$ heads should be less than the probability that the second coin produces at least $k$ heads. However, proving it is rather difficult with a standard counting argument. Coupling easily circumvents this problem. Let $X_1, X_2, \ldots, X_n$ be indicator random variables for heads in a sequence of $n$ flips of the first coin. For the second coin, define a new sequence $Y_1, Y_2, \ldots, Y_n$ such that if $X_i = 1$, then $Y_i = 1$; if $X_i = 0$, then $Y_i = 1$ with probability $(q-p)/(1-p)$. Clearly, the sequence of $Y_i$ has exactly the probability distribution of tosses made with the second coin. However, because of the coupling we trivially get that $X := \sum X_i \le Y:= \sum Y_i$ and so $\Prob(X \ge k) \le \Prob(Y \ge k)$, as expected. We will say that $X$ is (stochastically) bounded from above by $Y$, which we denote by $X \preceq Y$.

We will be using such coupling to simplify both our upper and lower bounds. Indeed, for lower bounds, it might be convenient to make some white vertices blue at some point of the process. Similarly, for upper bounds, one might want to make some blue vertices white.
Given a graph $G$, $S,T\subseteq V(G)$, and $\ell \in \N$, let $A(S,T,\ell)$ be the event that starting with blue set $S$, after $\ell$ rounds every vertex in $T$ is blue.

\begin{lemma}\label{lemma coupling}
For all sets $S_1\subseteq S_2\subseteq V(G)$, $T\subseteq V(G)$, and $\ell\in \N$,
\[
\Prob(A(S_1,T,\ell))\leq \Prob(A(S_2,T,\ell)).
\]
\end{lemma}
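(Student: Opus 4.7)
The plan is to construct a single probability space on which two instances of probabilistic zero forcing run in parallel, one starting with blue set $S_1$ and one starting with $S_2 \supseteq S_1$, in such a way that the first process's blue set is always contained in the second's. Once such a coupling is in hand, the event $A(S_1,T,\ell)$ is literally a subset of $A(S_2,T,\ell)$, and the inequality follows.

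To build the coupling, I would introduce, for every directed pair $(u,v)$ with $uv \in E(G)$ and every round $t \in \{1, \ldots, \ell\}$, a single uniform random variable $U_{u,v,t}$ on $[0,1]$, all mutually independent. In each of the two processes, I declare that $u$ forces $v$ during round $t$ if and only if $u$ is blue, $v$ is white, and
\[
U_{u,v,t} \le \frac{|N[u] \cap Z|}{\deg(u)},
\]
where $Z$ is the current blue set in that particular process. Since a vertex becomes blue as soon as at least one incident edge is forced, and since each $U_{u,v,t}$ has the correct marginal distribution, the two processes have the correct laws individually; the coupling only synchronizes their randomness.

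The core step is then an induction on $t$ showing that the blue sets $Z_t \subseteq Z_t'$ of the two processes satisfy $Z_t \subseteq Z_t'$ for every $t$. The base case $t=0$ holds since $S_1 \subseteq S_2$ by assumption. For the inductive step, suppose $Z_t \subseteq Z_t'$ and let $v \in Z_{t+1}$. If $v \in Z_t'$ already, then $v \in Z_{t+1}'$ trivially. Otherwise, some $u \in Z_t$ forced $v$ in the first process, meaning $U_{u,v,t} \le |N[u] \cap Z_t|/\deg(u)$. Since $u \in Z_t \subseteq Z_t'$ and $v \notin Z_t'$, and since $|N[u] \cap Z_t| \le |N[u] \cap Z_t'|$, the same threshold condition automatically forces $v$ in the second process as well, so $v \in Z_{t+1}'$.

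The inequality on the thresholds — namely that having more blue neighbors only increases the forcing probability on any edge — is exactly what makes the coupling monotone, mirroring the biased-coin coupling recalled just before the lemma. There is no real obstacle here; the only mild care required is the case analysis distinguishing whether $v$ is already blue in the second process, and the fact that vertices can be forced simultaneously by several neighbors (which is harmless because we only need $v$ to become blue in the second process, not to be forced by the same $u$). Applying the induction at $t = \ell$ yields $Z_\ell \subseteq Z_\ell'$, hence $T \subseteq Z_\ell$ implies $T \subseteq Z_\ell'$, and taking probabilities gives the claimed bound.
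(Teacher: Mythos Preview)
Your proof is correct and follows essentially the same monotone coupling argument as the paper. The only difference is cosmetic: you realize the coupling via shared uniform thresholds $U_{u,v,t}$, whereas the paper describes it as the second process ``topping up'' each forcing attempt with extra probability $(q-p)/(1-p)$; these are equivalent implementations of the same idea.
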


\begin{proof}
Let us imagine running two instances of the probabilistic zero forcing process simultaneously, one with initial blue set $S_1$ and the other with initial blue set $S_2$. The process with initial blue set $S_1$, which we will call the first process, will proceed purely at random, while the process with initial blue set $S_2$, henceforth called the second process, will be coupled with the first process.

More precisely, our goal is to show that the two processes can be coupled in such a way that the set of blue vertices in the first process is always a subset of the set of blue vertices in the second process. Once this is achieved, the claim follows immediately. 

Clearly, since $S_1 \subseteq S_2$, the desired property initially holds. Suppose that in the first process, a blue vertex $v \in S_1 \subseteq S_2$ is adjacent to a white vertex $w \notin S_1$. Then, $v$ forces $w$ to become blue with probability $p := |N[v] \cap S_1|/\deg(v)$. Note that $v \in S_2$ so $v$ is also blue in the second process. If $w$ is blue in the second process, then there is nothing to do. Otherwise, we couple the process as follows. If $w$ becomes blue in the first process, then it also becomes blue in the second process. If $w$ stays white in the first process, then $w$ becomes blue in the second process with probability $(q-p)/(1-p)$, where $q := |N[v] \cap S_2|/\deg(v) \ge p$. As a result, $w$ becomes blue in the second process with probability $p + (1-p) \cdot (q-p)/(1-p) = p$, as required. Finally, if $v$ is blue in the second process but it is white in the first one (that is, $v \in S_2 \setminus S_1$), then $w$ becomes blue in the second process with probability $q$ wheres $v$ has no influence on $w$ in the first process. We repeat this argument in each round to get that every time a vertex is forced to become blue in the first process, we will force the same vertex in the second process, unless it is already blue.
\end{proof}

\subsection{Alternative Forcing Processes}\label{Section alternative forcing}
In addition to the above techniques, it will also be useful to consider 
a more general way in which we may augment the forcing process without compromising
our ability to prove lower bounds on the propagation time.

Let us suppose that a subset $B_0$ of the graph $G=(V,E)$
is initially selected to be blue, and a subset $\scr{I} \subseteq \mb{N}_0$
is decided upon before the process begins. The forcing process is then
started, and is allowed to continue until we reach round $i$, where $i$ is the least element of  $\scr{I}$. 
At this point, suppose $B_i$ denotes the blue vertices of $G$ and for each $e \in E$, consider the event in which $e$ is forced in round $i+1$ (this event can occur only if $e \in E(B_{i}, V \setminus B_i)$). Let $Q_{e}^{i}$ denote the probability that this event occurs,
and choose $\til{Q}_{e}^{i}$ such that $Q^i_e \le \til{Q}^{i}_e \le 1$ for each $e \in E(B_{i}, V \setminus B_i)$, and $\til{Q}_{e}^{i}=0$ for $e\in E\setminus E(B_{i}, V \setminus B_i)$ (here
$\til{Q}^i_e$ is a random variable which depends on the process up until time $i$). Define an \textit{alternative forcing
rule} at this time, where each edge $e \in E$ is instead independently forced with probability $\til{Q}^{i}_e$.
When an edge is successfully forced in this framework, any of its remaining white endpoints are turned blue.
After this \textit{alternative forcing step} is performed, the process continues
up until the second smallest $i^* \in \scr{I}$. At this point, step $i^* +1$ is executed in the same manner
as step $i+1$, and the process then continues as in the original framework. We then continue in this way until the entire graph is blue.
Let us refer to a random process defined in this way for index set $\scr{I}$ and random variables
$(\til{Q}^{i}_e)_{i \in \scr{I},e\in E}$ as an \textit{alternative forcing process}.



We may couple an alternative forcing process with the standard forcing process in such a way that the blue vertices in the alternative process always contain those of the original process.
More formally, suppose that a forcing process is started with initial blue vertices $S \subseteq V$, and let $A(S,T,\ell)$ denote the event in which $T \subseteq V$ is colored blue after $\ell \ge 0$ steps. 
If an alternative forcing process $( \scr{I}, ( \til{Q}^{i})_{i \in \scr{I},e\in E})$ is also started at $S$, then let $\til{A}(S,T,l)$ denote the event in which $T$ is colored blue after $\ell$ steps. Under these conditions, the following result holds:

\begin{lemma}\label{lem:alternative_forcing_coupling}
For any alternative forcing process and any subsets $S, T \subseteq V$, 
\[
    \mb{P}( A(S,T,\ell) ) \le \mb{P}( \til{A}(S,T,\ell))
\]
for each $\ell \ge 0$.
\end{lemma}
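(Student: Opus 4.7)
The plan is to establish the inequality by constructing a coupling between the original forcing process and the alternative forcing process under which the blue set of the alternative always contains that of the original, at every time step. This is essentially an extension of the coupling proved in Lemma~\ref{lemma coupling}, modified to accommodate the auxiliary probabilities $\til{Q}^i_e$ on the rounds indexed by $\scr{I}$.

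Proceed by induction on $\ell$. Write $B_\ell$ (respectively $\til{B}_\ell$) for the blue set in the original (resp.\ alternative) process after $\ell$ steps. The base case $\ell = 0$ is immediate, since $B_0 = \til{B}_0 = S$. For the inductive step, assume $B_i \subseteq \til{B}_i$, and construct the coupling for round $i+1$ edge-by-edge, using the fact that, conditional on the history up to round $i$, the forcing events on distinct edges in either process are mutually independent.

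For each edge $e = uv$, let $p_e$ denote the probability that $e$ is forced in the original process at round $i+1$ and let $\til{p}_e$ denote the corresponding probability in the alternative process. If $i \notin \scr{I}$, both processes apply the standard rule, and $\til{p}_e \ge p_e$ follows from the monotonicity of $|N[w] \cap B|/\deg(w)$ in $B$ combined with the inductive hypothesis $B_i \subseteq \til{B}_i$. If instead $i \in \scr{I}$, then $\til{p}_e = \til{Q}^i_e \ge Q^i_e$ by definition of the alternative rule, while $Q^i_e$ is itself the standard-rule probability computed from $\til{B}_i$, so the same monotonicity gives $Q^i_e \ge p_e$ and hence $\til{p}_e \ge p_e$ again. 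In either case, apply the biased-coin coupling recalled before Lemma~\ref{lemma coupling}: if $e$ is forced in the original process, force it in the alternative; otherwise, independently force it in the alternative with probability $(\til{p}_e - p_e)/(1 - p_e)$. This preserves the correct marginal for $\til{p}_e$ while ensuring that every edge forced in the original is also forced in the alternative, so the set of vertices newly turned blue in the original is contained in that of the alternative, giving $B_{i+1} \subseteq \til{B}_{i+1}$.

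The main subtlety is that $\til{Q}^i_e$ is a random variable depending on the trajectory of the alternative process up to time $i$; this is handled by conditioning on the joint history of the two coupled processes, so that all of the inequalities and coupling constructions above apply pointwise. One should also note that edges $e = uv$ with, say, $u \in \til{B}_i \setminus B_i$ and $v$ white in both processes contribute only to $\til{B}_{i+1}$ and not to $B_{i+1}$, which further enlarges the alternative blue set and does not disturb the inclusion. Iterating this construction through step $\ell$ yields $B_\ell \subseteq \til{B}_\ell$ almost surely, and in particular $\{T \subseteq B_\ell\} \subseteq \{T \subseteq \til{B}_\ell\}$, which is exactly the asserted probability inequality.
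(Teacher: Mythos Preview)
The paper does not actually include a proof of this lemma; it simply states, just before the lemma, that one may couple the two processes so that the alternative's blue set always contains that of the original. Your argument supplies exactly this coupling and is correct in outline.

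There is one small gap in the edge-by-edge reasoning. You assert $\til{p}_e \ge p_e$ for every edge $e=uv$, but this can fail when the white endpoint $v$ of $e$ in the original process already lies in $\til{B}_i$: then both endpoints of $e$ are blue in the alternative, so $\til{p}_e = 0$ while $p_e > 0$, and the monotonicity of $|N[w]\cap B|/\deg(w)$ is simply not the relevant comparison. The desired inclusion $B_{i+1} \subseteq \til{B}_{i+1}$ is unaffected, since such a $v$ is already in $\til{B}_i \subseteq \til{B}_{i+1}$, but your intermediate claim that ``every edge forced in the original is also forced in the alternative'' is not literally true. The paper's proof of Lemma~\ref{lemma coupling} handles the analogous situation explicitly (``If $w$ is blue in the second process, then there is nothing to do''), and the same one-line patch works here: before comparing $p_e$ and $\til{p}_e$, first dispose of edges whose original-white endpoint is already blue in the alternative. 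With that case noted, the induction goes through as you wrote it.
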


\section{Upper Bound}\label{sec:upper_bound}

This section is devoted to prove the upper bound in the main result, Theorem~\ref{thm:main}.

\begin{theorem}\label{thm:upper_bound}
Suppose that $p = p(n)$ is such that $pn \gg \log n$. Then for each $v\in V(\Gnp)$ we have that a.a.s.
\[
pt_{pzf}(\Gnp,v) \le (1+o(1)) \log_2 \log_2 n + (1+o(1)) \log_3 (1/p).
\]
\end{theorem}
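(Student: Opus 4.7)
I would first condition on the expansion event from Theorem~\ref{thm:expansion} (which holds \aas), so that every vertex has degree $(1+o(1))d$ with $d := p(n-1)$, and every small set $S$ satisfies $|N(S)| \sim d|S|$. The key structural observation is that the blue set $B_t$ stays connected in $G$, since every newly forced vertex is adjacent to whoever forced it; fixing a spanning tree of $B_t$, whenever $b_t := |B_t| \ge 2$, every blue $u$ has a blue tree-neighbor, so $|N[u] \cap B_t| \ge 2$ and hence each blue--white edge out of $u$ is forced with probability at least $(2-o(1))/d$. I would then analyze $b_t$ in three phases.

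Phase~I drives $b_t$ from $1$ up to $\Theta(1/p)$. Using the observation above and $|N(u) \setminus B_t| = (1-o(1))d$ by expansion, the expected number of force events in a round is at least $(2-o(1)) b_t$; overlaps between different blues' white neighborhoods contribute only $O(b_t^2/n) = o(b_t)$ while $b_t \ll \sqrt{n/p}$. Applying the Chernoff bound~\eqref{chern} (once $b_t$ exceeds a slowly-growing function of $n$) gives $b_{t+1} \ge (3-o(1)) b_t$ \aas\ in each such round, and iterating yields $b_t \ge 1/p$ within $(1+o(1))\log_3(1/p)$ rounds. The initial small-$b_t$ regime, where Chernoff does not concentrate, needs a separate argument: with probability $1 - 1/e - o(1)$ the origin forces at least one neighbor while it is alone, so $b_t \ge 2$ within any $\omega(1)$ rounds \aas, and from there a second-moment / Paley--Zygmund argument shows that $b_t$ at least doubles with probability bounded below each round, reaching any slowly-growing threshold in $O(\log\log n) = o(\log_2\log_2 n)$ rounds, which is absorbed in the $(1+o(1))$ factor.

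Phase~II drives $b_t$ from $\Theta(1/p)$ up to a completion threshold $b_t = \Theta(\sqrt{n\log n/p})$ via doubly-exponential growth. Writing $c_t := b_t p \ge 1$, the ``random'' edges inside $B_t$ now dominate the tree edges, so for a typical blue $u$ we have $|N[u]\cap B_t| \sim 1+c_t$, while a typical white has $\sim c_t$ blue neighbors; such a white is forced in a round with probability $\sim 1-(1-(1+c_t)/d)^{c_t} \sim c_t^2/d$ (while $c_t^2 \ll d$), and a short computation gives $(1+c_{t+1}) \sim (1+c_t)^2$. Starting from $1+c_0 \sim 2$ this reaches the Phase~III threshold in $(1+o(1))\log_2\log_2 n$ rounds, with Chernoff concentration immediate since $b_t \gg \log n$ throughout. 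Phase~III then completes in one round: when $c_t^2 \ge 2d\log n$, each remaining white is forced with probability $\ge 1 - \exp(-c_t^2/d) \ge 1-n^{-2}$, so a union bound yields $B_{t+1} = V$ \aas.

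The main obstacle is Phase~I. In the small-$b_t$ regime Chernoff does not apply, so one must control the early rounds by weak moment methods combined with the fact that the origin is unlikely to remain isolated for long, and verify that this burn-in contributes only $o(\log_3(1/p)+\log_2\log_2 n)$ rounds. A second, more conceptual point is that the tree-based lower bound gives only $|N[u]\cap B_t| \ge 2$ pointwise (leaves of the spanning tree may have just one blue tree-neighbor, even though the tree average is close to $3$), which is what produces the factor-$3$ growth bound and hence the $\log_3(1/p)$ term, rather than the sharper $\log_4(1/p)$ that would match the lower bound in Theorem~\ref{thm:main}.
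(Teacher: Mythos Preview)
Your three-phase outline is essentially the paper's argument: a short burn-in, factor-$3$ growth exploiting connectivity of the blue set (the paper's Phase~2), doubly-exponential growth once internal blue edges dominate (Phase~4), and $O(1)$ rounds to finish.

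There is one real slip. You assert that the burn-in takes ``$O(\log\log n) = o(\log_2\log_2 n)$'' rounds, but these two quantities differ only by a constant factor, so an $O(\log\log n)$ burn-in would \emph{not} be absorbed in the $(1+o(1))$. Your doubling-with-bounded-probability argument in fact yields $O(\log T)$ rounds to reach a slowly growing threshold $T$, which \emph{is} $o(\log\log n)$ once you take, say, $T=\log\log n/\log\log\log n$; so the idea is right but the stated bound is not. The paper sidesteps this entirely: its Phase~1 simply runs $t_1 = \log\log n/\log\log\log n$ rounds with only $v$ blue, notes that each of the $\sim d$ neighbors of $v$ turns blue with probability at least $1-(1-1/\deg v)^{t_1}\sim t_1/d$, and applies a single Chernoff bound to the resulting $\Bin(\deg v,\, \sim t_1/d)$ variable, whose mean $\sim t_1\to\infty$ concentrates.

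A second technical point to be careful about: in your Phase~II you treat the edges inside $B_t$ and from $B_t$ to whites as fresh $\Bin(\cdot,p)$ variables, but $B_t$ itself depends on the graph. The paper handles this by tracking the newest layer $Y_i$ separately and exposing edges in stages---first within $Y_i$ to extract a subset $Z_i$ with many internal neighbors, then from $Z_i$ to the whites to identify ``good'' white vertices---so that each Chernoff application involves genuinely unexposed edges. Your sketch is heuristically correct, but making the recursion $(1+c_{t+1})\sim(1+c_t)^2$ rigorous requires this kind of bookkeeping (and an extra transitional phase, the paper's Phase~3, to ensure $y_1 p\to\infty$ at the start of the doubly-exponential regime).
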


Before we move to analyzing the process, let us mention how we are going to apply Theorem~\ref{thm:expansion}. This is a standard technique in the theory of random graphs but it is quite delicate. We wish to use the expansion properties guaranteed a.a.s.\ in Theorem~\ref{thm:expansion}, but we also wish to avoid working in a conditional probability space.

Thus, we will use an unconditioned probability space, but we will provide an argument that assumes we have the expansion properties of Theorem~\ref{thm:expansion}. Since these properties hold a.a.s., the measure of the set of outcomes in which our argument does not apply to is $o(1)$, and thus can be safely excised at the end of the argument. 

\begin{proof}[Proof of Theorem~\ref{thm:upper_bound}]
Fix $\omega = \omega(n)$ to be a function that tends to infinity arbitrarily slowly so that some inequalities below hold. Let $p=p(n)$ be such that $d=p(n-1) \ge \omega \log n$.

\medskip

\textbf{Phase 1:} We start the process with an arbitrary vertex $v \in V(G)$ and we expose all edges from $v$ to the rest of the graph. By Theorem~\ref{thm:expansion}, we may assume that $\deg(v) \sim d$. This phase lasts 
\[
t_1 := \frac {\log \log n}{\log \log \log n} = o(\log \log n)
\] 
rounds. We will prove that \aas\ at the end of this phase at least 
\[
b_1 := t_1 \left( 1 - \frac {\log \log \log n}{\log \log n} \right) \sim t_1
\] 
neighbors of $v$ are blue. (Let us mention that the choice of $t_1=t_1(n)$ is rather arbitrary. Any function tending to infinity as $n \to \infty$ would work. On the other hand, it is convenient to have $t_1 = o(\log \log n)$ so that the length of this phase is negligible compared to the total length.)

Fix any $w \in N(\{v\})$. The probability that $w$ is white at the end of Phase 1 is at most
\[
\left( 1 - \frac {1}{\deg(v)} \right)^{t_1} \le \exp \left( - \frac {t_1}{\deg(v)} \right) = 1 - \frac {t_1}{\deg(v)} ( 1 + O( t_1 / d )) \le 1 - q_1
\]
for 
\[
q_1 :=  \frac {t_1}{\deg(v)} \left( 1 - \frac{\log \log n}{\log n} \right),
\]
as $d \gg \log n$. Hence, the number of neighbors of $v$ that are blue at the end of Phase~1 can be stochastically lower bounded by a random variable $X_1 \in \Bin(\deg(v), q_1)$ with $\E [X_1] = t_1 ( 1 - \log \log n / \log n )$. After applying Chernoff's bound (\ref{chern}) with 
\[
\eps = \frac { (\log \log \log n)^{2/3} }{ (\log \log n)^{1/2} }
\]
we get that \aas\ $X_1 = \E[X_1] (1+O(\eps)) = t_1 (1+O(\eps)) \ge b_1$. (Note that $\eps^2 t_1 = (\log \log \log n)^{1/3} \to \infty$ as $n \to \infty$.) 

\medskip

\textbf{Phase 2:} We start this phase with $b_1$ blue vertices. Indeed, we know that after Phase~1, \aas\ we have at least $b_1$ blue vertices, and via Lemma~\ref{lemma coupling}, we may assume that we have exactly $b_1$ vertices while still claiming an upper bound.

We will show that \aas, in each round of Phase~2, the number of blue vertices increases by at least a multiplicative factor of $A := 3(1-\omega^{-1/4})$. This phase ends when the number of blue vertices exceeds 
\[
b_2 := \frac {n}{d\omega}.
\]
Let us note that for very dense graphs it might happen that $b_2 < b_1$ and if this happens, then this phase actually does not occur. For sparser graphs, \aas\ this phase lasts at most
\begin{align*}
t_2 := \log_A \left( \frac {b_2}{b_1} \right) &= \log_A \left( \frac {n \log \log \log n}{d \omega \log \log n} (1+o(1)) \right) \le \log_A (1/p)\\
&= \frac {\log_3(1/p)}{\log_3 A} = (1+O(\omega^{-1/4})) \log_3 (1/p) \sim \log_3 (1/p)
\end{align*}
rounds, provided that $\omega = \omega(n)$ tends to infinity sufficiently slowly.

Suppose that at the beginning of some round, blue vertices form set $S$ of size $s=|S|$, where $b_1 \le s < b_2$. By Theorem~\ref{thm:expansion}, we may assume that $|N(S)| = ds (1+O(\omega^{-1/2})) \sim ds$ and that $\delta(G) = \Delta(G) (1+O(\omega^{-1/2})) = d (1+O(\omega^{-1/2})) \sim d$. Fix any $w \in N(S)$ and let $v$ be a neighbor of $w$ in $S$ (if $w$ has more than one neighbor in $S$, pick one of them arbitrarily). Since $v$ is not only blue but it has at least one blue neighbor (note that the process guarantees that $S$ induces a connected graph), $v$ forces $w$ to become blue with the probability at least 
\[
\frac {2}{\deg(v)} = \frac {2}{(1+O(\omega^{-1/2}))d} = \frac {2 + O(\omega^{-1/2}) }{d} > \frac {2 - \omega^{-1/3}}{d} =: q_2.
\]

As a result, the number of vertices in $N(S)$ that become blue at the end of this round can be stochastically lower bounded by the random variable $X_2 \in \Bin(|N(S)|,q_2)$. Since $|N(S)|=ds(1+O(\omega^{-1/2}))$, we have $\E [X_2] = 2s ( 1 + O(\omega^{-1/3}))$. It follows from Chernoff's bound~\eqref{chern} applied with $\eps = \omega^{-1/2}$ that $X_2 = 2s ( 1 + O(\omega^{-1/3}))$ with probability $1 - \exp ( - \Theta( s/\omega ))$. We will say that the round is bad if at the end of it the number of blue vertices is less than $As = 3s(1-\omega^{-1/4})$, that is if $s+X_2<As$, which only occurs when $X_2 < 2s(1-\omega^{-1/4})$.

It is worth noting here that we are performing our calculations here on the assumption that no bad rounds have occurred previously. This is justified by the observation that the probability that some round in Phase 2 is bad is at most
\[
\sum_{i \ge 0} \exp \left( - \Theta \big( A^i b_1 / \omega \big) \right) \le \sum_{i \ge 0} 2^{-i} \exp \left( - \Theta \big( b_1 / \omega \big) \right) = 2 \exp \left( - \Theta \big( b_1 / \omega \big) \right) = o(1),
\]
provided that $\omega = \omega(n)$ tends to infinity sufficiently slowly. Hence, \aas\ the second phase ends with $b_2$ blue vertices in at most $t_2$ rounds (again, provided $b_2 > b_1$; otherwise there is no Phase 2).

At this point, it will be useful to define the set $S_2$ as the set of vertices that were initially blue in the last round of Phase $2$. This definition will be useful once we begin Phase $4$.

\medskip

\textbf{Phase 3:} Suppose first that $b_2 \ge b_1$, that is, 
\[
d= \frac {n}{b_2 \omega} \le \frac {n}{b_1 \omega} \sim \frac {n \log \log \log n}{\omega \log \log n}.
\]
The argument for very dense graphs can be easily adjusted, and we will come back to this once we deal with sparser graphs.

We start this phase with $b_2 = n/(d\omega)$ blue vertices that form the set $S$.  By Theorem~\ref{thm:expansion}, we may assume that $|N(S)|=db_2 (1+O(\omega^{-1/2})) = (n/\omega) (1+O(\omega^{-1/2})) \sim n/\omega$. This phase lasts 
\[
t_3 := \frac {\log \log n}{\log \log \log n} = o(\log \log n)
\] 
rounds. We will prove that \aas\ at the end of this phase at least 
\[
b_3 := \frac {n \log \log n}{d (\log \log \log n)^2} \gg \frac {n}{d}
\] 
vertices of $N(S)$ are blue. 

Arguing as in Phase 2, in each round, each white vertex $w$ in $N(S)$ becomes blue with probability at least $q_2 = (2-\omega^{-1/3})/d$. Hence, the probability that $w$ is blue at the end of this phase is at least
\begin{align*}
1-(1-q_2)^{t_3} & \ge 1 - \exp (-q_2t_3) = q_2t_3 (1+O(q_2t_3)) \\
& = \frac {2t_3}{d} (1+O(\omega^{-1/3})) \ge \frac {2t_3}{d} (1 - \omega^{-1/4}) =: q_3,
\end{align*}
provided that $\omega = \omega(n)$ tends to infinity sufficiently slowly. As it was done earlier, the number of vertices in $N(S)$ that become blue at the end of this phase can be lower bounded by random variable $X_3 \in \Bin(|N(S)|,q_3)$ with 
\[
\E [X_3] = |N(S)| \cdot q_3 \sim \frac {n}{\omega} \cdot \frac {2t_3}{d} \gg \frac {n \log \log n}{d (\log \log \log n)^2},
\]
and Chernoff's bound~(\ref{chern}) implies the conclusion. 

Let $S_3$ denote the initial set of blue vertices in the very last round of Phase $3$. It is worth noting here that the only edges of $\Gnp$ that are exposed at the end of this phase are the edges within $S_3$ and edges between $S_3$ and $V(G) \setminus S_3$. This will be important in the next phase.

\medskip

Finally, let us discuss how to deal with very dense graphs. If $d \ge 2 n / \log \log \log n$, then $b_1 > b_3$ and so there is nothing to do: there are more than $b_3$ blue vertices at the end of Phase 1 and so there is no Phase 2 nor Phase 3, so we just proceed immediately to Phase 4. For $d$ such that
\[
\frac {n \log \log \log n}{\omega \log \log n} \sim \frac {n}{b_1 \omega} < d < \frac {2n}{\log \log \log n},
\]
there is no Phase 2 ($b_1 > b_2$) but there is Phase 3 ($b_1 < b_3$). This time, instead of starting Phase 3 with all blue vertices, we select (arbitrarily, while retaining connectivity of the blue subgraph) any subset of $b_2$ blue vertices and proceed with the argument as before. Lemma \ref{lemma coupling} implies that we may give away these vertices while still maintaining a strict upper bound on the entire length of the process.

\medskip

\textbf{Phase 4:} Up until this point, we have done our calculations assuming that the blue vertices which are performing forces have only one blue neighbor. Heuristically, we should not have lost too much with this assumption since thus far we have had only a negligible number of blue vertices in each phase, and thus expect few edges within the blue subgraph. In this phase, the set of blue vertices will grow to be large enough that we expect many edge in the blue subgraph. Our analysis will exploit this fact to provide better bounds on how fast the number of blue vertices grows with every step.

This phase consists of some number of rounds that are going to be indexed with $i \in \N$.
At the beginning of this phase, we will partition the blue vertices into two sets, $Y_0$ and $Y_1$, where
\[
Y_0=\begin{cases} \{v\}&\text{ if }b_1>b_2,b_1>b_3\\
S_2&\text{ if }b_2>b_1,b_2>b_3\\
S_3&\text{ if }b_3>b_1,b_3>b_2,
\end{cases}
\]
and $Y_1$ is the set of blue vertices that are not in $Y_0$. Then, at the beginning of Round $i$, let $Y_i$ be the set of vertices that were turned blue in the previous round. Thus the $Y_j$'s ($0 \le j \le i$) partition the blue vertices at the beginning of Round $i$. An important property is that the only edges that are exposed at this point are the edges with at least one endpoint in $Y_{\leq i-1}$, where  $Y_{\le \ell} := \bigcup_{k=0}^{\ell}Y_k$.

Let us concentrate on a given Round $i$, $i \in \N$. Let $y_i=|Y_i|$ and suppose that 
\[
y_i \ge t^{2^{i-1}} p^{-1} 8^{2-2^i} \ge t/p
\qquad \text{ and } \qquad
y_i \le \sqrt{\frac {n}{p\omega}}.
\]
Let us label vertices of $Y_i$ as $v_1, v_2, \ldots, v_{y_i}$. Our fist task is to identify a set $Z_i \subseteq Y_i$ of blue vertices with at least $y_ip/3$ neighbors in $Y_i$. These vertices have a strong forcing power; it will be convenient to use them to control the number of white vertices that become blue at the end of this round. In order to simplify the argument and keep events independent, let us partition $Y_i$ into $Y_i^- = \{v_\ell : \ell \le y_i/2\}$ and $Y_i^+ = Y_i \setminus Y_i^-$. Now, for any vertex $v \in Y_i^+$, we expose edges from $v$ to $Y_i^-$ and put $v$ into $Z_i$ if $X_v$, the number of neighbors in $Y_i^-$, is at least $y_ip/3$. Note that $X_v \in \Bin(\lfloor y_i/2 \rfloor, p)$ with $\E[X_v] \sim y_ip/2 \to \infty$ so $v \in Z_i$ with probability at least $1/2$ (in fact, it tends to 1). Hence, $|Z_i|$ can be stochastically lower bounded by $\Bin(\lceil y_i/2 \rceil,1/2)$ with expectation at least $y_i/4$. It follows from Chernoff's bound~(\ref{chern}) that $|Z_i| \ge y_i/5$ with probability $1 - \exp( -\Theta(y_i))$. If $|Z_i| < y_i/5$, then we say that this round fails and we finish the process prematurely (later we will show that a.a.s.\ no round will fail). 

Our next task is to estimate the number of white vertices (that is, vertices in $V \setminus Y_{\le i}$) that are adjacent to at least $y_ip/6$ vertices in $Z_i$; we will call them \emph{good}. Fix $w \in V \setminus Y_{\le i}$. The expected number of neighbors of $w$ in $Z_i$ is equal to $|Z_i|p \ge y_ip/5$. Hence, by Chernoff's bound~(\ref{chern}), $w$ is good with probability at least 1/2 (as before, in fact, it tends to 1). Hence, the number of good vertices is lower bounded by $\Bin(|V \setminus Y_{\le i}|, 1/2)$ with expectation asymptotic to $n/2$, since $\sum_{0 \le \ell \le i} y_\ell = o(n)$. Hence, with probability at least $1-\exp(-\Theta(n))$, there are at least $n/3$ good vertices. (We have a lot of room in the argument here.) If the number of good vertices is less than $n/3$, then we say that this round fails and we finish the process prematurely. 

Our final task is to estimate how many good vertices become blue at the end of Round $i$. Fix any good vertex $w$. Since each neighbor of $w$ in $Z_i$ forces $w$ to become blue with probability $(y_ip/3)/d(1+o(1))$, $w$ stays white with probability at most
\begin{align*}
\left( 1 - \frac {y_ip/3}{d(1+o(1))} \right)^{y_ip/6} &\le \exp \left( - \frac {y_i^2p^2}{19d} \right) \le \exp \left( - \frac {y_i^2p}{19n} \right) \\
&= 1 - \frac {y_i^2p}{19n} (1+O(y_i^2p/n)) \le 1 - \frac {y_i^2p}{20n},
\end{align*}
since $y_i^2p/n \le (n/(p\omega))p/n = 1/\omega = o(1)$. Hence, the expected number of good vertices that become blue is at least $(n/3)(y_i^2p/(20n)) = y_i^2p/60$. By Chernoff's bound~(\ref{chern}), with probability at least $1- \exp( - \Theta(y_i^2p))$ there are at least $y_{i+1} := (y_i/8)^2p$ new blue vertices that form $Y_{i+1}$. As always, we say that the round fails if there are not enough new blue vertices and we stop the process prematurely. 

Recall that 
\[
y_i \ge t^{2^{i-1}} p^{-1} 8^{2-2^i}  
\]
and so 
\[
y_{i+1} = \left( \frac {y_i}{8} \right)^2 p \ge t^{2^{i}} p^{-1} 8^{2-2^{i+1}} .
\]
We will run this phase for at most
\[
t_4 := \log_2 \log_{t/64} n \le \log_2 \log_2 n
\]
steps. Phase 4 finishes prematurely with probability at most
\begin{align*}
\sum_{i\geq 1}  \left( \exp ( - \Theta(y_i) ) + \exp ( - \Theta(n) ) + \exp ( - \Theta(y_i^2 p) )\right) &= \sum_{i \ge 1} \exp ( - \Theta(t^{2^{i-1}} p^{-1}) ) \\
&\leq t_4\exp ( - \Theta(t p^{-1}) )\\
&= \exp ( - \Theta(b_3) ) = o(1).
\end{align*}
We will stop phase 4 either once there are more than 
\[
b_4 := \sqrt{\frac {n}{p\omega}}
\]
new blue vertices, or $t_4$ steps have passed. Since at the end of Round $i$, there are $y_{i+1} \ge (t/64)^{2^i} / p \ge (t/64)^{2^i}$ new blue vertices, \aas\ we have $b_4$ new vertices before $t_4$ total rounds have elapsed, so \aas\ Phase 4 ends once we have $b_4$ new blue vertices. These new vertices will be able to force the rest of the graph blue in the next two rounds. 
\medskip

\textbf{Two Last Rounds (Phase 5):} Recall that at the beginning of the first round of Phase 5 there are more than $b_4 = \sqrt{n/ (p\omega)}$ new blue vertices. We  select any subset of $b_4$ vertices (arbitrarily) and the analysis above implies that \aas\ at least $(b_4/8)^2 p = n / (64 \omega)$ white vertices become blue; let us put them into a  set $Y$.

At the beginning of the final round of the whole process, similarly to the previous phase, \aas\ we can find a set $Z\subset Y$ with $|Z|\geq |Y|/5$ with every vertex in $Z$ having at least $|Y|p/3$ neighbors. Each white vertex expects at least $|Y|p/5 = (n / 64 \omega) p/5 \gg \log n$ neighbors in $Z$ (since $d = p(n-1) \gg \log n$ and $\omega=\omega(n)$ is tending to infinity arbitrarily slowly). By Chernoff's bound~(\ref{chern}), the expected number of white vertices that are \emph{not} good is at most $n \exp ( - \Theta( |Z|p )) \le n \exp ( - 2 \log n) = n^{-1} = o(1)$ and so, by Markov's inequality, \aas\ all white vertices are good. Finally, arguing as before, the expected number of  good (white) vertices that stay white is at most 
\begin{eqnarray*}
n \exp \left( - \Theta \left( \frac {|Z|^2 p^2}{d} \right) \right) &=& n \exp \left( - \Theta \left( \frac {(np)^2}{d \omega^2} \right) \right) = n \exp \left( - \Theta \left( \frac {d}{\omega^2} \right) \right)  \\ 
&=& n \exp \left( - 2 \log n \right) = n^{-1} = o(1).
\end{eqnarray*}
It follows that \aas\ all vertices become blue and the process is over.

\medskip

Adding up the total time, this analysis shows that a.a.s.\ we have
\begin{align*}
pt_{pzf}(\Gnp,v)&\leq t_1+t_2+t_3+t_4+2\\
&\leq o(\log\log n)+(1+o(1))\log_3(1/p)+o(\log\log n)+\log_2\log_2 n+2\\
&=(1+o(1))\log_2\log_2 n+ (1+o(1))\log_3 (1/p),
\end{align*}
as claimed.
\end{proof}

\section{Lower Bound}

This section is devoted to prove the lower bound in the main result, Theorem~\ref{thm:main}. For simplicity, we independently consider sparse and dense random graphs, starting from the dense case that is easier to deal with. Let us point out that Theorem~\ref{thm:lower_bound_dense} yields the bound of $(1+o(1))\log_{2}\log_2 n$ whereas Theorem~\ref{thm:lower_bound_sparse} yields the bound of $(1+o(1)) \log_{4}(1/p) = (1/2+o(1)) \log _2 (1/p) \ge (1+o(1)) \log_2 \log_2 n$ as it holds for $1/p \ge \log^2 n$. As a result, both theorems imply a general bound of $(1+o(1)) \max ( \log_2 \log_2 n, \log_4 (1/p) )$, as claimed in the main result. Let us also point out that the claimed lower bound holds with probability $1-o(n^{-1})$ and so, in fact, we get that a.a.s.\ $pt_{pzf}(\Gnp) \ge (1+o(1))\log_{2}\log_2 n$.

\begin{theorem} \label{thm:lower_bound_dense}
Suppose that $p=p(n) \ge 1/\log^{2} n$. Then, for each vertex $v\in V(\Gnp)$, the following bound holds with probability $1-o(n^{-1})$:
$$
pt_{pzf}(\Gnp,v) \ge (1+o(1))\log_{2}\log_2 n.
$$
\end{theorem}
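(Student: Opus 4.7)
The plan is to invoke Lemma~\ref{lem:alternative_forcing_coupling} and work with an \emph{alternative} forcing process in which every potentially forcing edge is independently forced with a common, crude probability. Since the alternative process only enlarges the blue set, it suffices to show that even this easier process fails to color all of $V(\Gnp)$ blue in fewer than $(1+o(1))\log_2\log_2 n$ rounds. Setting $T := \lfloor (1-\eps)\log_2\log_2 n\rfloor$ for an arbitrarily small $\eps>0$, the goal is to show $|Z_T|<n$ with probability $1-o(n^{-1})$.

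First, condition on the \emph{nice graph} event: with probability $1-o(n^{-1})$, every vertex $u\in V(\Gnp)$ satisfies $d/2 \le \deg(u)\le 2d$, where $d = p(n-1)$. This follows from Chernoff's bound \eqref{chern} for a single vertex together with a union bound over the $n$ vertices, and the sharp probability is afforded by the dense hypothesis $p \ge 1/\log^{2} n$, which gives $d \gg \log n$ by a wide margin. On this event, for any $u \in Z_t$ the probability of a force across any incident edge is at most $z_t/(d/2) = 2z_t/d$, since $|N[u]\cap Z_t|\le z_t$ where $z_t := |Z_t|$. We therefore take the alternative process in which, during round $t+1$, each edge $uw$ with $u\in Z_t$ and $w\notin Z_t$ is forced independently with probability $\tilde q_t := \min(2z_t/d,\,1)$; Lemma~\ref{lem:alternative_forcing_coupling} then gives a stochastic upper bound on $|Z_t|$ in the true process.

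Now analyze one round of the alternative process. Conditional on $Z_t$, the number $F_t$ of forced edges is a sum of independent Bernoullis whose mean is at most $|E(Z_t,V\setminus Z_t)|\cdot \tilde q_t \le (z_t\cdot 2d)(2z_t/d) = 4z_t^{2}$. The multiplicative Chernoff tail $\Prob(\Bin(m,q)\ge k) \le (emq/k)^{k}$, applied with $k$ of order $z_t^{2} + \log n / \log\log n$, yields $F_t \le O(z_t^{2}) + O(\log n /\log\log n)$ with probability $1-o(n^{-2})$. Hence $z_{t+1} \le C z_t^{2} + C \log n / \log\log n$ for some absolute constant $C$, and since $T = O(\log\log n)$, a union bound over the rounds makes this estimate simultaneous with failure probability $o(n^{-1})$.

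Iterating from $z_0 = 1$, the first step gives $z_1 = O(\log n /\log\log n)$; for every $t\ge 1$ the quadratic term dominates, so $L_t := \log_2 z_t$ obeys $L_{t+1} \le 2L_t + O(1)$, yielding $L_T \le 2^{T-1}(\log_2\log n + O(1))$. Plugging in $T = (1-\eps)\log_2\log_2 n$ gives $L_T = O((\log_2 n)^{1-\eps}\log\log n) = o(\log_2 n)$, so $z_T < n$ and the process cannot have finished by round $T$. Taking $\eps = \eps(n)\to 0$ sufficiently slowly converts the $(1-\eps)$ factor into $1-o(1)$, yielding the stated bound. The main obstacle is the concentration of $F_t$ during the earliest rounds, when $z_t$ and hence $\mathbb{E}[F_t]$ are $O(1)$ and the additive form of Chernoff is useless; the multiplicative tail above is precisely the ingredient that provides failure probability $o(n^{-2})$ at the cost of an $O(\log n /\log\log n)$ additive slack, which is easily absorbed into the recursion since from round two onward the quadratic term $C z_t^{2}$ is dominant.
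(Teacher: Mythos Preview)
Your overall strategy---replacing the true forcing rule by a uniform edge-forcing probability $\tilde q_t=\min(2z_t/d,1)$ via Lemma~\ref{lem:alternative_forcing_coupling}, then bounding the number $F_t$ of forced edges round by round---is sound and is a legitimate alternative to the paper's argument. However, the claim that $F_t \le O(z_t^{2})+O(\log n/\log\log n)$ holds with probability $1-o(n^{-2})$ is false as stated, and this is exactly what you need for the theorem's $1-o(n^{-1})$ conclusion.

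The problem is the intermediate regime for $z_t$. Take $z_t^{2}=\Theta(\log n/\log\log n)$, which is a perfectly possible value since you only have an \emph{upper} bound on $z_t$. Then $\mu\le 4z_t^{2}=\Theta(\log n/\log\log n)$ and your threshold is $k=C_0z_t^{2}+C_1\log n/\log\log n=\Theta(\log n/\log\log n)$. The ratio $e\mu/k$ is bounded away from~$0$ (it is $\Theta(1)$), so the tail $(e\mu/k)^{k}$ is at best $c^{\,\Theta(\log n/\log\log n)}=n^{-\Theta(1/\log\log n)}$ for some constant $c<1$. This is $o(1)$ but decidedly not $o(n^{-2})$. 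Your remark that ``from round two onward the quadratic term $Cz_t^{2}$ is dominant'' conflates the deterministic upper-bound sequence $w_t$ with the random $z_t$: the recursion is only an upper bound, and the actual process can linger with $z_t$ in the bad window.

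There are two clean fixes, and either preserves your approach. First, simply replace the additive slack $\log n/\log\log n$ by $\log n$: with $k=C_0z_t^{2}+C_1\log n$ and $C_0,C_1$ large, $e\mu/k\le 1/2$ always and $(1/2)^{k}\le (1/2)^{C_1\log n}=o(n^{-2})$; the recursion $z_{t+1}\le Cz_t^{2}+C\log n$ still gives $L_t:=\log_2 z_t\le 2^{t-1}(\log_2\log n+O(1))$, hence $z_T<n$ for $T=(1-o(1))\log_2\log_2 n$. Second---and this is what the paper does---use Lemma~\ref{lemma coupling} to \emph{pad} the blue set at the outset to size $b_0=\omega\log n/p$; then $z_t\ge b_0$ throughout, so $z_t^{2}p\gg\log n$ and every Chernoff application is comfortably $o(n^{-2})$.

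For comparison, the paper's proof never conditions on the entire graph; it keeps the edges between $Y_{i-1}$ and $V\setminus Y_{\le i-1}$ unexposed until round~$i$, then controls $\deg_{Y_{i-1}}(v)$ for white~$v$ (property~(P1)) before bounding $|Y_i|$ (property~(P2)). Your approach trades that exposure bookkeeping for a global degree condition and a direct edge count in the alternative process; it is arguably more elementary, but you must get the concentration right uniformly in $z_t$.
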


\begin{proof}
Let $\omega=\omega(n)$ be any function tending to infinity (slowly enough) as $n \to \infty$. Since we aim for a lower bound, by Lemma~\ref{lemma coupling}, we may assume that we begin with a subset $Y_{0} \subseteq V$ of size $y_0 = b_{0}:= \omega \log n / p$, which consists of the vertices of the graph which are initially blue, including the specified vertex $v$. The forcing process is then started, and for each $i \in \mathbb{N}$ we denote $Y_{i} \subseteq V$ as the vertices of $\Gnp$ which are turned blue in round $i$.
If we fix $i \ge 0$, then $Y_{ \le i}:= \bigcup_{j=0}^{i}Y_{j}$ consists of the blue vertices
of $\Gnp$ after the first $i$ rounds. 
Finally, let $b_{i}:= |Y_{\le i}|$ and $y_{i}:=|Y_{i}|$. 
We may define the stopping time $\tau \ge 0$ to be the
first $i \ge 0$ such that $Y_{ \le i} = V$. Our goal is to show that with probability $1-o(n^{-1})$ we have $\tau \ge (1 + o(1))\log_{2}\log_2 n$.

In order to achieve this bound, we must control the number of white vertices which can be
forced in a given round. As a result, we need to be careful in which order we expose edges of $\Gnp$. We will preserve the following property at the beginning of round $i$ for each $i \ge 1$: 
\begin{enumerate} 
    \item[(P0)] the edges between $V \setminus Y_{\le i-1}$ and $Y_{i-1}$ are \emph{not} exposed yet.
\end{enumerate}
Indeed, it will be possible since the forcing at step $i$ has not occurred yet and the pairs of vertices that are involved were white in the previous step (clearly, edges between two white vertices cannot contribute to any forcing). On the other hand, the edges between $V \setminus Y_{\le i-1}$ and $Y_{\le i-2}$ are already exposed (when $i \ge 2$). We now expose the edges between $V \setminus Y_{\le i-1}$ and $Y_{i-1}$ and we check for the following property:
\begin{enumerate}
    \item[(P1)] $\deg_{Y_{i-1}}(v) \le 2 \, p \, y_{i-1}$ for all $v \in V \setminus Y_{\le \, i-1}$. 
\end{enumerate}
Finally, the forcing takes place and at the end of this round we investigate the following property:
\begin{enumerate}
    \item[(P2)] $y_{i} \le 3 \, b_{i-1}^{2}$. 
\end{enumerate}

We say that round $i$ is \textit{good} provided the two properties (P1) and (P2) are satisfied.
In fact, since we aim for a lower bound, by Lemma~\ref{lemma coupling}, we may assume that $y_j = 3 \, b_{j-1}^{2}$; that is, at the end of each round we may turn some additional vertices blue to satisfy this equality. 

Set 
\begin{align*}
t_{F} &:= \log_2 \left( \frac {\log_2 (n^{1/3})}{\log_2 (4 \omega \log n / p)} \right)\\
&=  \log_{2} \log_2 n - \log_2 \log_{2}( 4 \omega \log n / p) - O(1) \\
&= \log_2 \log_2 n - \log_2 \log_2 \log_2 n - O(1) = (1+o(1)) \log_2 \log_2 n,
\end{align*}
as $p \ge 1/\log^{2} n$.
Suppose that the first $t_{F}$ rounds are good. Under this assumption, we have that
\begin{align*}
b_{i} = b_{i-1} + y_i &\le (3+o(1)) b_{i-1}^2\\ &\le 2^2 b_{i-1}^2 \le \ldots \le 2^{2^{i}+2^{i-1}+\ldots+2} b_0^{2^i} \le 2^{2\cdot 2^i} b_0^{2^i} = \left( 4 \omega \log n / p \right)^{2^i}
\end{align*}
for all $1 \le i \le t_{F}$. This implies that $b_{t_F} \le n^{1/3} < n$, and so $\tau > t_{F}$ which yields the desired claim. We shall now prove that with probability $1-o(n^{-1})$ the first $t_{F}$ rounds are good, which will complete the proof.

Let us fix $0 \le i < t_{F}$, and assume that the first $i$ rounds are good (Note that there is nothing to assume when $i=0$; that is, when we begin the first round). As we previously noted,
we may assume that the edges between $Y_{i}$ and $V \setminus Y_{ \le i}$ are unexposed at this time--- see property (P0). Under these assumptions,
we will estimate the probability that round $i+1$ is good as well.

Let us start with property (P1). Observe that for each $v \in V \setminus Y_{\le i}$, we have that $\mathbb{E} \deg_{Y_i}(v) = y_{i} p$.
Since the first $i$ rounds are good, $y_{i} p \ge y_{0}p= \omega \log n$.
By Chernoff's bound (\ref{chern}), this implies
\[
    \mathbb{P}( |\deg_{Y_i}(v) - y_{i}p| \ge y_{i}p) \le 2\exp(-y_{i}p/3) \le 2 n^{-\omega/3}
\]
for each $v \in V \setminus Y_{\le i}$. We may therefore use the union bound to conclude that
\begin{equation*}
    \mathbb{P}( \cup_{v \in V \setminus Y_{\le i}} | \deg_{Y_i}(v) - y_{i}p | \ge y_{i}p) \le n^{-\omega/3 +1} = n^{-\Theta(\omega)} = o((nt_F)^{-1}).
\end{equation*}
As a result, with probability $1-o((nt_F)^{-1})$, every vertex of $v \in V \setminus Y_{\le i}$ has $\deg_{Y_{i}}(v) \le 2 y_{i} p$ (that is, property~(P1) is satisfied for round $i+1$),
provided the first $i$ rounds are good. 

Let us now assume that property~(P1) holds and move to investigating property~(P2). Consider the probability that $v \in V \setminus Y_{\le i}$ is \textit{not} forced by $u \in Y_{\le i}$ with $uv\in E(\Gnp)$.
Clearly,
\[
    \mathbb{P}(\text{$u$ does not force $v$}) = \left( 1 - \frac{\deg_{Y_{\le i}[u]}}{\deg(u)} \right) \ge \left( 1 - \frac{b_i}{\delta} \right),
\]
where $\delta$ is the smallest degree of $\Gnp$. Thus,
\[
    \mathbb{P}(\text{$Y_{\le i}$ does not force $v$})  \ge \left( 1 - \frac{b_i}{\delta} \right)^{\deg_{Y_{\le i}}(v)} \ge \left(1 - \frac{b_i}{\delta}\right)^{2 b_i p},
\]
as we may assume that $\deg_{Y_{\le i}}(v) \le \sum_{j=0}^{i} 2 y_{i} p = 2 b_{i} p$ in light of property~(P1). 
By Theorem~\ref{thm:expansion}, we may also assume that $\delta \sim  np$. (See the beginning of Section~\ref{sec:upper_bound} for a discussion how this theorem is applied.) 
Note that 
$$
\frac {b_i}{np} \le \frac {b_i}{np} (2b_ip) = \frac {2b_i^2}{n} \le \frac {2b_{t_F}^2}{n} \le \frac {2n^{2/3}}{n} = o(1)
$$
when $i< t_{F}$. Thus,
\begin{align*}
    \mathbb{P}(\text{$Y_{\le i}$ does not force $v$}) & \ge \left(1 - \frac{b_i}{n p}\right)^{2 b_i p} = \exp\left(-\frac{2b^2_{i}p}{np} (1+o(1)) \right) \\
                                                & = 1 - \frac{2b^2_{i}}{n} (1+o(1)).
\end{align*}
We may therefore conclude that
\[
    \mb{P}( \text{$Y_{\le i}$ forces $v$}) \le (1+ o(1)) \frac{2b^{2}_i}{n},
\]
for all $v \in V \setminus Y_{\le i}$. Let us now consider $Y_{i+1}$, the vertices forced by $Y_{\le i}$
in a single round. If we assume that the above claims hold, then we have that
\[
    \mb{E} |Y_{i+1}| = \sum_{v \in V\setminus Y_{\le i}} \mb{P}( \text{$Y_{\le i}$ forces $v$}) \le (1 + o(1)) 2b^{2}_i.
\]
Now the first $i$ rounds are assumed to be good, so we know that 
$b_{i} \ge b_{0} = \omega \log n /p$.
We may therefore use Chernoff's bound (\ref{chern}) to observe that

\begin{align*}
    \mb{P}( |Y_{i+1}| \ge 3 b_{i}^{2}) &\le \exp\left(\frac{- (2+o(1)) b_i^{2}}{3}\right)  \le \exp\left( -\frac{\omega^{2}\log^{2}n}{3p^{2}}\right) \le \exp\left( - \frac{\omega^{2}\log^{2}n}{3}\right)       \\
                                       & = n^{-\omega^2 \log n / 3} = o((nt_F)^{-1}).
\end{align*}
As a result, we have that with probability $1-o((nt_F)^{-1})$, $|Y_{i+1}| \le 3 \, b_i^{2}$, and so the total number of
blue vertices in $Y_{i+1}$ is of the desired amount (property~(P2)). 

Combining the two concentration results, we get that if the first $i$ rounds are good, then with probability $1-o((nt_F)^{-1})$ round $i+1$ is good as well.
We remark that when $i=0$, the above analysis shows that with probability $1-o((nt_F)^{-1})$ round $1$ is good.
In order to complete the proof, observe that 
\begin{align*}
\mb{P}(\cup_{i=1}^{t_{F}}\text{Round $i$ is bad}) &= \sum_{i=1}^{t_F} \mb{P}(\text{Round $i$ is bad and the earlier rounds are good}) \\
&\le t_F \cdot o((nt_F)^{-1}) = o(n^{-1}),
\end{align*}
and the proof is complete.
\end{proof}

\bigskip

Let us now move to the sparse case. Before we prove our lower bound
in this regime, let us discuss how the techniques used here differ from those seen in the previous section.
Suppose that $Y_0 \subseteq V$ is initially colored blue, and a forcing process
is begun. Specifically, let $Y_{i}$ denote the blue vertices of $\Gnp$ which are forced in round
$i$ for each $i \ge 1$. As before, we use $Y_{\le i}$ to denote the collection of blue vertices
after $i \ge 0$ rounds. 

In the previous arguments, we have been careful to ensure that the edges 
with one endpoint in $V \setminus Y_{\le i}$
and one in $Y_{i}$ remain unexposed by the time we consider the vertices forced in round $i+1$, namely $Y_{i+1} \subseteq V \setminus Y_{\le i}$. This allows us to ensure that the edges between $Y_{\le i}$ and $V \setminus Y_{\le i}$ are distributed as $\Bin( |Y_{\le i}||V \setminus Y_{\le i}|,p)$,
which proves convenient in our computations. In particular, we are able to guarantee that each vertex $v \in V \setminus Y_{\le i}$
has $\mb{E}( \deg_{Y_{\le i}}(v)) \rightarrow \infty$, which makes proving concentration via Chernoff's bound amenable. 

While such techniques work when $p(n) \ge 1 / \log n$, 
this is a corollary of range of $|Y_{\le i}|$ which we concern ourselves with in the above argument; namely, when $|Y_{\le i}| \gg \log n /p$.
When $p(n) < 1/ \log n$, we are instead interested in analyzing $|Y_{\le i}|$ up until the point at which 
$|Y_{\le i}| = O( 1/p)$. As a result, we do \textit{not} have sharp degree concentration throughout the range
we must analyze. Specifically, if $|Y_{\le i}| = O( 1/p)$, then $\mb{E}( \deg_{Y_{\le i}}(v)) = O(1)$
for each $v \in V \setminus Y_{\le i}$, and so $\deg_{Y_{\le i}}(v)$ does not witness sharp concentration as $n \rightarrow \infty$.

In order to circumvent these issues, we first restrict ourselves to an alternative forcing process (recall that alternative forcing processes were described in Section \ref{Section alternative forcing}) defined as follows. We first work with the index set taken to be all of $\mb{N}_0$. Thus, we shall apply alternative forcing rules throughout every step of the process. If we then consider the round $i \in \mb{N}_0$, $u \in Y_{ \le i}$ forces its white neighbors with probability $\min\{\deg_{Y_{\le i}}[u]/ \til{d}_{L},1\}$, where $d:= (n-1)p$ and $\til{d}_{L}:= ( 1 - \omega^{-1})d$ (here $\omega$ tends to infinity arbitrarily slowly). As $\aas$\ each vertex of $\Gnp$ has degree at least $\til{d}_L$, we  know that this alternative forcing process is valid $\aas$; that is, the conditions defined in Section \ref{Section alternative forcing} are satisfied. As a result, we may apply Lemma~\ref{lem:alternative_forcing_coupling} to couple this alternative forcing process with the standard forcing process for all but $o(1)$ of the instances of $\Gnp$. Specifically, any $\aas$ lower bound on this alternative forcing process will imply an $\aas$ lower bound on the standard forcing process. In what follows, all our results shall be  with respect to this alternative forcing process. We shall use the same terminology as before, as it should be unambiguous regarding which process we are referring to.

Before continuing, let us motivate why it is convenient to work with this alternative forcing rule. If we consider a vertex $v \in V \setminus Y_{0}$ and $u \in Y_{0}$,
then the probability that $u$ forces $v$ in a single round is $p \, ( \min\{1, \deg_{Y_{0}}[u]/ \til{d}_{L}\})$,
as $\deg_{Y_{0}}[u]$ is independent of the event $(u,v) \in \Gnp$ and $\til{d}_{L}$ is a fixed
value. This observation allows us to prove the following lemma easily. The proof of this auxiliary lemma can be found in Section~\ref{sec:auxiliary}.

\begin{lemma}\label{lem:forcing_independence}
Suppose that we are given $\Gnp$ on vertex set $V$, with $Y_{0} \subseteq V$ initially
blue where $|Y_0|=k$ and $Y_0=\{u_1, \ldots ,u_k\}$. For each $u \in Y_0$, denote $Y_1(u)$ as the vertices of $V \setminus Y_0$ which are
forced by $u$ after a single round.
\begin{enumerate} \label{eqn:forcing_independence}
    \item If $S_{u_1},\ldots ,S_{u_k} \subseteq V \setminus Y_0$, then for each $u \in U$ and $v \in V \setminus Y_0 \cup S_{u}$,
    \[
    \mb{P}( (u,v) \in \Gnp \, | \, \cap_{j=1}^{k}  Y_1(u_j) = S_{u_j}) \le p.
    \]
    \item The indicator random variables $\{ \mathbf{1}_{[ (u,v) \in \Gnp]} \}_{ u \in Y_0 , v \in V \setminus Y_0 \cup S_{u}}$ are conditionally independent
    of the event $\cap_{j=1}^{k} \{ Y_1(u_j) = S_{u_j} \}$. 
\end{enumerate}
\end{lemma}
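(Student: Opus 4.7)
The plan is to introduce an auxiliary Bernoulli decomposition so that the alternative forcing process becomes a family of independent coins whose dependencies are entirely controlled by the subgraph induced on $Y_0$. For each $u\in Y_0$ and each $v\in V\setminus Y_0$, let $A_{uv}:=\mathbf{1}_{[(u,v)\in E(\Gnp)]}\sim\mathrm{Bernoulli}(p)$, and let $F_{uv}$ be a fresh $\mathrm{Bernoulli}(p_u)$ coin, where $p_u:=\min\{\deg_{Y_0}[u]/\til{d}_L,1\}$, taken independent of everything else given $p_u$. The alternative forcing rule is then exactly $v\in Y_1(u)\iff A_{uv}=F_{uv}=1$. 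The key structural observation is that $p_u$ is measurable with respect to the $\sigma$-algebra $\mathcal{F}_0:=\sigma(\Gnp[Y_0])$ generated by the interior edges of $Y_0$, so conditionally on $\mathcal{F}_0$ every $p_u$ is deterministic and the family $\{(A_{u_jv},F_{u_jv})\}_{j\in[k],\,v\in V\setminus Y_0}$ is a collection of mutually independent Bernoulli pairs.

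I would then work conditionally on $\mathcal{F}_0$ throughout. The event $E:=\bigcap_{j=1}^k\{Y_1(u_j)=S_{u_j}\}$ factors as a conjunction of atoms each of which involves exactly one pair $(A_{u_jv},F_{u_jv})$: for fixed $j$,
\[
\{Y_1(u_j)=S_{u_j}\}=\bigcap_{v\in S_{u_j}}\{A_{u_jv}=F_{u_jv}=1\}\,\cap\,\bigcap_{v\in V\setminus(Y_0\cup S_{u_j})}\neg\{A_{u_jv}=F_{u_jv}=1\},
\]
and across $j$ the underlying coordinates are automatically disjoint. Hence under $\mathcal{F}_0$ the atoms of $E$ are mutually independent.

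For part (i), fix $u\in Y_0$ and $v\in V\setminus(Y_0\cup S_u)$. The only atom of $E$ involving $A_{uv}$ is $\neg\{A_{uv}=F_{uv}=1\}$, so by the factorization above and a one-line Bayes computation,
\[
\mb{P}(A_{uv}=1\mid E,\mathcal{F}_0)=\frac{p(1-p_u)}{1-p\,p_u}\le p,
\]
with the inequality equivalent to $p_u(1-p)\ge 0$. Since this holds on every realization of $\mathcal{F}_0$, averaging preserves it and (i) follows. For part (ii), the same factorization implies that, conditionally on $\mathcal{F}_0$ and $E$, the joint law of $\{A_{uv}:u\in Y_0,\,v\in V\setminus(Y_0\cup S_u)\}$ is the product of its one-dimensional conditional marginals, which is the asserted conditional mutual independence.

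The main subtlety is the order of conditioning: the independence in (ii) genuinely requires $\mathcal{F}_0$ in the conditioning, because an average of product-form Bernoullis over the random parameters $p_u$ is not itself a product. With $\mathcal{F}_0$ included, as is natural in the subsequent application, the argument reduces to careful bookkeeping of which coordinates appear in which atom, and I expect no obstacle beyond keeping that bookkeeping clean.
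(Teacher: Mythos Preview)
Your proof is correct and follows essentially the same approach as the paper: both expose the edges inside $Y_0$ first so that each $p_u=\deg_{Y_0}[u]/\til{d}_L$ becomes deterministic, then exploit the resulting product structure to compute the conditional edge probability as $p(1-p_u)/(1-p\,p_u)\le p$. Your explicit two-coin decomposition $(A_{uv},F_{uv})$ is a tidier packaging of exactly the same calculation the paper does by hand, and your remark that part~(ii) needs $\mathcal{F}_0$ in the conditioning matches the paper's convention of silently conditioning on the interior edges throughout.
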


While the above lemma appears as if it can only be applied to the first round of the forcing
process, it can in fact be used through the subsequent rounds as well. To see how this can be done, observe
that once $Y_{1}$ is conditioned upon and all the edges contained within $Y_{\le 1}$ are exposed,
the indicator variables $\{ \mathbf{1}_{[ (u,v) \in \Gnp]} \}_{ u \in Y_{\le 1} , v \in V \setminus Y_{\le 1}}$ are independent and each occur with probability at most $p$. 
Moreover, the edges within $V \setminus Y_{\le 1}$ are independent
of $Y_{\le 1}$, and thus each occur independently with probability $p$.

At this point, let us denote $\scr{G}_1$ as $\Gnp$ conditioned on the vertices $Y_{\le 1}$ and the edges within this set. More precisely, we are conditioning on $Y_1$,
the resulting vertices after one round of probabilistic zero forcing, followed by the edges in $\Gnp$ within $Y_{\le1}$.
Moreover, let us consider another random graph $\scr{G}_2$, whose edges in $Y_{\le 1}$ are the same as that of
$\scr{G}_1$. On the other hand, the edges from $Y_{\le 1}$ into $V \setminus Y_{\le 1}$ and edges completely within $V \setminus Y_{\le 1}$
are each defined to occur independently with probability exactly $p$. As each edge of $\scr{G}_1$ from $V \setminus Y_{\le 1}$
into $Y_{\le 1}$ occurs with probability at most $p$, $\scr{G}_2$ can be constructed such that $\scr{G}_1 \subseteq \scr{G}_2$,
while maintaining the distributional properties we desire.
Combining this construction with the observation that the alternative forcing process is edge monotonic, we
get that $pt_{pzf}( \scr{G}_1 , Y_{\le 1}) \ge pt_{pzf}( \scr{G}_2, Y_{\le 1})$. As a result, any lower bound
on $pt_{pzf}( \scr{G}_2, Y_{\le 1})$ yields a lower bound on $pt_{pzf}( \scr{G}_1 , Y_{\le 1})$. Moreover, 
the structure of $\scr{G}_2$ is amenable to an application of Lemma \ref{lem:forcing_independence}.

Let us now summarize the main result we shall use throughout the proof of the lower bound in the sparse
regime. We remark that while the result is stated for round $i \ge 0$ of the forcing process in $\Gnp$,
the above coupling of $\scr{G}_1 \subseteq \scr{G}_2$ allows us to assume that $i=0$ in the proof of the
statement. The proof of Corollary \ref{cor:forcing_independence} can be found in Section \ref{sec:auxiliary}.

\begin{corollary} \label{cor:forcing_independence}
Suppose that $Y_{\le i} \subseteq V$ consists of the blue vertices after $i \ge 0$ 
rounds. Let $Y_{i+1}= \bigcup_{u \in Y_{\le i}} Y_{i+1}(u)$ denote the blue vertices forced in round $i+1$,
where $Y_{i+1}(u)$ corresponds to the white vertices forced by $u$ in round $i+1$.
If $S \subseteq V \setminus Y_{\le i}$ and we condition on the
event in which $Y_{i+1}=S$, as well as $Y_{\le i}$ and the edges within it, then $e(Y_{\le i},Y_{i+1})$ is stochastically upper bounded by $|S|+  \Bin( |Y_{\le i}||S|, p)$. Moreover, if $S^* \subseteq V \setminus (Y_{\le i} \cup S)$, then $e(Y_{\le i}, S \cup S^*)$ is stochastically upper bounded by $|S| + \Bin( |Y_{\le i}||S| + |Y_{\le i}||S^*|, p)$ given $Y_{i+1}=S$ (and $Y_{\le i}$ together with the edges within $Y_{\leq i}$).
\end{corollary}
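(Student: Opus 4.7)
The plan is to reduce to the case $i = 0$ using the coupling $\scr{G}_1 \subseteq \scr{G}_2$ constructed in the paragraph preceding the statement: since $\scr{G}_1 \subseteq \scr{G}_2$, edge counts into any fixed vertex set in $\scr{G}_1$ are stochastically dominated by the corresponding counts in $\scr{G}_2$, so it suffices to prove the bound in $\scr{G}_2$ after one round of the alternative forcing process starting from $Y_0$. In $\scr{G}_2$, every pair $(u,v)$ with $u \in Y_0$ and $v \in V \setminus Y_0$ is an edge independently with probability exactly $p$, and each forcing attempt of $u$ on $v$ succeeds independently with probability $q_u := \min\{\deg_{Y_0}[u]/\til{d}_L,\, 1\}$. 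For each such pair, introduce the three mutually exclusive events $A_{uv}$ (``edge present and forcing succeeds''), $B_{uv}$ (``edge present but forcing fails''), and $C_{uv}$ (``no edge''), with probabilities $p q_u$, $p(1-q_u)$, and $1-p$. These triples are jointly independent across all pairs $(u,v)$. Note that $(u,v)$ is an edge iff $A_{uv} \vee B_{uv}$, and that $v \in Y_1$ iff $A_{uv}=1$ for some $u \in Y_0$.

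The core of the argument is a per-vertex stochastic bound on $\deg_{Y_0}(v)$ conditional on whether $v$ is forced. For $v \in S$, let $U^*(v) := \min\{u \in Y_0 : A_{uv}=1\}$ be the smallest-indexed forcer of $v$, and condition on $U^*(v) = u^*$. The edge $(u^*,v)$ is then guaranteed to be present; for each $u < u^*$ we have $A_{uv}=0$, so an elementary Bayes computation using the joint independence yields that $(u,v)$ is conditionally an edge with probability $p(1-q_u)/(1-p q_u) \le p$; and for $u > u^*$ no conditioning is imposed on $(A_{uv},B_{uv})$, so $(u,v)$ is conditionally an edge with the unconditional probability $p$. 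All these edge indicators remain conditionally independent. Averaging over the possible values of $u^*$ gives
\[
\deg_{Y_0}(v) \,\big|\, \{v \in Y_1\} \;\preceq\; 1 + \Bin(|Y_0|,\, p).
\]
An analogous (and simpler) argument handles $v \in S^* \subseteq V \setminus (Y_0 \cup S)$: conditioning on $v \notin Y_1$ is the same as conditioning on $A_{uv}=0$ for every $u$, which makes each $(u,v)$ conditionally an edge with probability $p(1-q_u)/(1-p q_u) \le p$ independently, so $\deg_{Y_0}(v) \,\big|\, \{v \notin Y_1\} \preceq \Bin(|Y_0|,\, p)$.

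To assemble the global bound, observe that $\{Y_1 = S\}$ factors as $\bigcap_{v \in S}\{v \in Y_1\} \cap \bigcap_{v \in V \setminus (Y_0 \cup S)}\{v \notin Y_1\}$, and that the triples $(A_{uv},B_{uv},C_{uv})_{u \in Y_0}$ for distinct $v$ are independent in $\scr{G}_2$; consequently the per-$v$ conditional distributions of $\deg_{Y_0}(v)$ remain mutually independent given $\{Y_1 = S\}$ (together with $Y_0$ and its internal edges, which do not affect the cross-edges at all). Summing the per-vertex bounds over $v \in S$ gives $e(Y_0,S) \preceq |S| + \Bin(|Y_0||S|,\, p)$, and adding the contribution over $v \in S^*$ gives $e(Y_0,\, S \cup S^*) \preceq |S| + \Bin(|Y_0||S| + |Y_0||S^*|,\, p)$, as required. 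The main obstacle will be the careful bookkeeping of the three-way independence structure and its interaction with the two different types of conditioning (on $\{v \in Y_1\}$ versus $\{v \notin Y_1\}$); once the Bayes identity $p(1-q_u)/(1-p q_u) \le p$ is established and the factorization of $\{Y_1=S\}$ across $v$ is in place, the proof reduces to summing independent Binomial upper bounds.
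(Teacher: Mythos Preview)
Your proof is correct. It differs from the paper's in the direction of the decomposition: the paper fixes the finer event $\bigcap_j\{Y_1(u_j)=S_j\}$, invokes Lemma~\ref{lem:forcing_independence}, and sums over the blue vertices $u_j$ via $e(u_j,S)=|S_j|+e(u_j,S\setminus S_j)$; you instead condition only on $\{Y_1=S\}$ and sum over the white vertices $v$, using the minimum-indexed forcer $U^*(v)$ to isolate exactly one guaranteed edge per $v\in S$. Both arguments rest on the same Bayes identity $p(1-q_u)/(1-pq_u)\le p$ (you derive it in line; the paper packages it as Lemma~\ref{lem:forcing_independence}). Your white-side decomposition has two small advantages: it conditions on precisely the event named in the corollary rather than a refinement, and the first-forcer trick yields the additive term $|S|$ on the nose, whereas the paper's blue-side sum naturally produces $\sum_j|S_j|$, which only equals $|S|$ when the sets $S_j$ are pairwise disjoint.
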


We are now ready to prove our lower bound in the sparse regime. It will be convenient 
to define $\til{d}_U := (1 + \omega^{-1}) d$, where $d:=(n-1)p$, as a means to $\aas$ upper bound the maximum degree $\Delta$
of $\Gnp$. The result will follow by considering $C_2 \to 1$ and $C_1 \to 0$, both tending to the corresponding constants slowly enough as $n \rightarrow \infty$,
such that the asymptotic computations in the below argument continue to hold. For instance, we may take $C_{1}(n) \gg \max\{ 1/\log \log n , \log \log \log (1/p) / \log(1/p)\}$ and $1-C_{2}(n) \gg \log \log(1/p)/ \log(1/p)$, while still ensuring that $C_{1} \to 0$ and $C_{2} \to 1$.

\begin{theorem}\label{thm:lower_bound_sparse}
Suppose that $\log(n)/n \ll p=p(n) \le 1 / \log^2 n$ and $0<C_{1}< C_{2} <1$. Then, if $v \in V$ is fixed, the following
bound holds a.a.s.: 
\[
pt_{pzf}(\Gnp,v) \ge (C_2 - C_1) \log_{4}(1/p).
\]
\end{theorem}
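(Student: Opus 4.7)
The plan is to use Lemma~\ref{lemma coupling} to start from a larger initial blue set, replace the standard process by the alternative forcing process of Section~\ref{Section alternative forcing}, and then inductively track the pair $(b_i, e(Y_{\le i}))$, showing that it grows by at most a factor of $4$ per round. Concretely, I would begin with $Y_0 \supseteq \{v\}$ of size $b_0 := p^{-C_1}$ (which is legitimate for a lower bound on $pt_{pzf}(\Gnp,v)$ by Lemma~\ref{lemma coupling}), and adopt the alternative rule in which each $u \in Y_{\le i}$ forces each white neighbor independently with probability $\min\{\deg_{Y_{\le i}}[u] / \til d_L, 1\}$, where $\til d_L := (1-\omega^{-1})d$. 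Theorem~\ref{thm:expansion} guarantees $\delta(\Gnp) \ge \til d_L$ a.a.s., so Lemma~\ref{lem:alternative_forcing_coupling} makes this alternative process a stochastic upper bound on the standard one.

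Set $t := \lfloor (C_2-C_1)\log_4(1/p)\rfloor$. I would then prove by induction on $i \le t$ that, with failure probability $o(1/t)$ per round,
\[
b_i \le (1+o(1))^i \cdot 4^i \cdot b_0 \qquad \text{and} \qquad e(Y_{\le i}) \le (1+o(1))^i \cdot b_i.
\]
For the inductive step, condition on $Y_{\le i}$ together with its internal edges and invoke the $\scr G_1 \subseteq \scr G_2$ coupling from the discussion preceding Corollary~\ref{cor:forcing_independence}, which keeps the edges from $Y_{\le i}$ into $V \setminus Y_{\le i}$ as fresh $\mathrm{Ber}(p)$'s. The expected number of edges forced in round $i+1$ is then
\[
(1+o(1)) \sum_{u \in Y_{\le i}} \deg_{Y_{\le i}}[u] = (1+o(1))\bigl(b_i + 2 e(Y_{\le i})\bigr) \le 3(1+o(1))\, b_i,
\]
and since $|Y_{i+1}|$ is bounded above by the number of forced edges while $\E|Y_{i+1}| \ge 3 p^{-C_1} \to \infty$, Chernoff's bound~\eqref{chern} gives $|Y_{i+1}| \le 3(1+o(1)) b_i$, hence $b_{i+1} \le 4(1+o(1)) b_i$. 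For the edge count, Corollary~\ref{cor:forcing_independence} bounds $e(Y_{\le i}, Y_{i+1}) \le |Y_{i+1}|(1 + O(b_i p))$ (using $b_i p = o(1)$ throughout the range, since $b_i \le p^{-C_2}$ with $C_2 < 1$), while $e(Y_{i+1})$ is controlled by a fresh binomial tail; these combine to give the inductive bound on $e(Y_{\le i+1})$.

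Iterating the $t$ rounds yields $b_t \le (1+o(1))^t \cdot 4^t \cdot p^{-C_1} = (1+o(1))^t \cdot p^{-C_2}$; choosing $\omega = \omega(n)$ to grow to infinity slowly enough so that the accumulated slack is absorbed, the hypothesis $1-C_2 \gg \log\log(1/p)/\log(1/p)$ forces $b_t = o(n)$. Hence a.a.s.\ the alternative process (and therefore the standard one) has not colored all of $V$ after $t$ rounds, giving $pt_{pzf}(\Gnp,v) > t \ge (C_2 - C_1) \log_4(1/p) - 1$, which after rounding delivers the claim. The main obstacle will be the bookkeeping underlying the repeated conditioning: to apply Corollary~\ref{cor:forcing_independence} cleanly at each round one must carefully specify the order of edge exposures, and $\omega$ must be chosen so that simultaneously (i) each Chernoff error is $o(1/t)$ (roughly $\omega \ll p^{-C_1/2}$), and (ii) the multiplicative slack $(1+1/\omega)^t$ stays smaller than $n\, p^{C_2}$ (roughly $\omega \gg 1/(1-C_2)$). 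The lower-bound hypotheses on $C_1$ and $1-C_2$ are precisely what make these two constraints jointly satisfiable.
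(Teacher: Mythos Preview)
Your proposal is correct and follows essentially the same route as the paper: enlarge the initial blue set to size $p^{-C_1}$ via Lemma~\ref{lemma coupling}, pass to the alternative forcing process of Section~\ref{Section alternative forcing}, and inductively track both $b_i$ and the internal edge count (equivalently, the average degree) to show the blue set grows by a factor $\sim 4$ per round, using Corollary~\ref{cor:forcing_independence} for the cross edges. The only notable presentational differences are that the paper makes the error bookkeeping explicit through a recursively defined sequence $\eta_j$ (rather than your generic $(1+o(1))^i$), and that the small-expectation edge terms $e(Y_{i+1})$ and $e(Y_{\le i},Y_{i+1})-|Y_{i+1}|$ are controlled by Markov's inequality rather than Chernoff, since their expectations need not tend to infinity; you should be sure to use Markov there when you fill in the details.
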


\begin{proof}
As in the proof of Theorem \ref{thm:lower_bound_dense}, we begin with a subset $Y_{0} \subseteq V$
of size $b_{0}:= 1/ {p}^{C_1}$  consisting of the vertices of the graph which
begin initially blue, including the given $v \in V$ (observe $b_{0} < n$ as $p \gg \log n/n$). The forcing process is then started, and for each $i \in \mathbb{N}$ we denote $Y_{i} \subseteq V$
as the vertices of $\Gnp$ which are turned blue in round $i$.
If we fix $i \ge 0$, then $Y_{ \le i}:= \cup_{j=0}^{i}Y_{j}$ consists of the blue vertices
of $\Gnp$ after the first $i$ rounds.

We may define the stopping time $\tau \ge 0$ to be the
first $i \ge 0$ such that $|Y_{ \le i}| > 2(1/p)^{C_2}$ (note $2(1/p)^{C_2}<n$ since $p\gg \log n/n$). Our goal is to show that $\aas$ $\tau \ge (C_2 - C_1)\log_{4}(1/p)$. 
It will be convenient to once again control how the rounds of the forcing process progress. In order to do so,
we first introduce some notation.
For each $j \ge 0$, let $b_{j}:= |Y_{\le j}|$, $\Avg(Y_{\le j}):= \sum_{u \in Y_{\le j}} \deg_{Y_{\le j}}(u) / |Y_{\le j}|$, and 
$\eps = \eps(n):= p^{C_1/3}$. (Note that $\eps=o(1)$, as $p = o(1)$.) Also, recursively define each $\eta_{j}$, where $\eta_{0}:=0$ and
\begin{equation} \label{eqn:recursive_error_definition}
\eta_{j+1}:= \left(\frac{3}{4} + \frac{\eps}{2}\right)\eta_{j}+ \frac{3}{2} \eps + 6p^\frac{1 - C_2}{2}
\end{equation}
for $j \ge 0$. 
We then say that round $j \ge 1$ is \textit{good} provided
the following two properties are satisfied:
\begin{enumerate} \label{eqn:good_average_degree}
    \item[(P1)] $|Y_j| = (3 + \eta_{j-1})(1+ \eps) \, b_{j-1}$,
    \item[(P2)] $\Avg(Y_{ \le j}) \le 2 + \eta_{j}$.
\end{enumerate}

Observe that if the first $t_{F}:=(C_2 - C_1)\log_{4}(1/p)$ many rounds satisfy (P1), then 
\begin{align*}
    |Y_{\le t_{F}}| &= \prod_{j=1}^{t_{F}} [ 1 + (3 + \eta_j)(1 + \eps)](1/p)^{C_1} \\
            & = \prod_{j=1}^{t_F} [4 + 3 \eps + \eta_j + \eta_j \eps] (1/p)^{C_1}  \\
            & \le [4 + 7 \eps + 12 p^{\frac{1 - C_2}{2}}]^{t_{F}}(1/p)^{C_1} \\
            &=  4^{t_{F}}\left[1 + \frac{7}{4} \eps + 3p^{\frac{1 - C_2}{2}}\right]^{t_{F}}(1/p)^{C_1},
\end{align*}
as $\eta_j \le 3 \eps + 12 p^\frac{1 - C_2}{2}$ for each $ j \ge 0$. Yet $\eps = p^{C_1/3}, p^{\frac{1 - C_2}{2}} \rightarrow 0$
as $n \rightarrow \infty$, and so since $t_{F} = O( \log( 1/p))$, 
\begin{align*}
   \left[ 1 + \frac{7}{4}\eps + 3 p^{\frac{1 - C_2}{2}}\right]^{t_{F}} &= \exp \left[t_{F}\left( \frac{7}{4}\eps + 3p^{\frac{1 - C_2}{2}}\right) (1+o(1)) \right]  \\
&= e^{o(1)} = 1 +o(1).
\end{align*}


As a result, 
\[
|Y_{\le t_F}| \leq (1 + o(1)) 4^{t_F}(1/p)^{C_1}=(1+o(1))(1/p)^{C_2}
\]
after substituting $t_{F}= (C_2 - C_1) \log_{4}(1/p)$, and so we have that $|Y_{\leq t_{F}}| \leq 2 (1/p)^{C_2}$.
Thus, under these conditions, $\tau \ge t_{F}$. If we can therefore prove that $\aas$\ the first $(C_2 - C_1)\log_{4}(1/p)$ many rounds are good,
then the proof will be complete.

We begin by considering the number of edges with both endpoints in $Y_0$. Observe
that
\[
    \mb{E} \, \frac{e(Y_0)}{ |Y_0|}  = \frac{\binom{|Y_0|}{2} p}{ |Y_0| } = \Theta( |Y_0|p )  = \Theta( p^{1 - C_1}) = o(1),
\]
as $p \le 1 / \log^2 n$ and $C_{1} <1$ by assumption. Thus, we may use Markov's inequality
to conclude that $\aas$\ $\Avg(Y_0) \le 2$. When this event holds, we say that round $0$
is good. We shall assume that this is the case in what follows.

Let us now fix $i \ge 0$ and condition on $Y_{\le i}$ as well the edges within it. Under the assumption
that the first $i$ rounds are good, we shall lower bound the probability that round $i+1$ is good as well.
In addition to the previous information, let us now condition on the random subset $Y_{i+1} \subseteq V \setminus Y_{\le i}$.
If $|Y_{i+1}| > (3 + \eta_i)(1+ \eps) |Y_{\le i}|$, then the round is \textit{bad} and we stop the analysis (we shall later
show that this occurs with sufficiently small probability). Otherwise, we continue and show that (P2) holds with probability at least
$1 - 2p^{\frac{1 - C_2}{2}}$.

Our goal at this point is to control the average degree of $Y_{\le i+1}$, namely
$\Avg(Y_{\le i+1})$. We first observe that we may decompose
the random variable $e(Y_{\le i+1})$ into three terms where
\[
    e(Y_{\le i+1}) = e(Y_{\le i}) + e(Y_{\le i}, Y_{i+1}) + e(Y_{i+1}).
\]
Observe that since round $i$ was assumed to be good, we know by (P2) that 
\[
2 \, e(Y_{\le i}) \le (2 + \eta_i) |Y_{\le i}|.
\]
Moreover, we have assumed that $|Y_{i+1}| \le (3 + \eta_{i})(1+ \eps) |Y_{\le i}|$, so we can apply Lemma~\ref{lemma coupling} and add white vertices to $Y_{i+1}$ to ensure that 
$|Y_{i+1}| = (3 + \eta_i)(1 + \eps) |Y_{\le i}|$. Thus, we have that $|Y_{\le i+1}| = (1+(3 + \eta_i)(1 + \eps))|Y_{\le i}|$. As a result,
\[
2\, e(Y_{\le i}) \le \frac{(2 + \eta_i) |Y_{\le i+1}|}{1+(3 + \eta_i)(1 + \eps)}.
\]

If we now consider $e(Y_{i+1})$, then observe that the edges within $Y_{i+1}$
are distributed as $\Bin( \binom{|Y_{i+1}|}{2}, p)$. Thus,
\[
 \frac{ 2 \, \mb{E} \, e(Y_{i+1})}{ |Y_{i+1}|} \le  p |Y_{i+1}|.
\]
Since we have assumed that the first $i$ rounds satisfy property (P2) and that $i< t_{F}$, we know that $|Y_{i+1}| \le 2(1/p)^{C_2}$. As
a result, $p |Y_{i+1}|  \le 2p^{1-C_2}$. By Markov's inequality, this implies that 
\[
\frac{ 2 \, e(Y_{i+1})}{ |Y_{\le  i+1}|} \le 2p^{\frac{1 - C_2}{2}}
\]
with probability at least $1 - p^{\frac{1 - C_2}{2}}$, as $C_{2} < 1$.

It remains to control the edges between $Y_{\le i}$ and $Y_{i+1}$. 
By Corollary \ref{cor:forcing_independence}, we know that 
\[
 e(Y_{\le i}, Y_{i+1}) \preceq  |Y_{i+1}| + \Bin(|Y_{\le i}||Y_{i+1}|,p).
\]
Now if $X \in \Bin(|Y_{\le i}||Y_{i+1}|,p)$, then $\mb{E} X \, / |Y_{\le i+1}| \le p |Y_{i+1}|$. As before, we may conclude
that 
\[
\frac{X}{|Y_{\le i+1}|} \le 2p^{\frac{1 - C_2}{2}}
\]
with probability at least $1-p^{\frac{1 - C_2}{2}}$ by Markov's inequality. On the other hand,
$$
|Y_{i+1}|/ |Y_{\le i+1}| = (3 + \eta_i)(1 + \eps)/ (1 + (3 + \eta_i)(1 + \eps))
$$ 
as $|Y_{\le i+1}| = (1 + (3 + \eta_i)(1 + \eps)) |Y_{\le i}|$.
To conclude, we get that
\begin{align*}
    \Avg(Y_{\le i+1}) &= \frac{2[ e(Y_{ \le i}) + e( Y_{\le i}, Y_{i+1}) + e(Y_{\le i+1}) ]}{ |Y_{\le i+1}|} \\
                      &\le \frac{2 + \eta_i}{1 + (3 + \eta_i)(1 + \eps)} + \frac{2((3 + \eta_i)(1 + \eps))}{1 + (3 + \eta_i)(1 + \eps)} + 6p^{\frac{1-C_2}{2}} \\
                      &= \frac{2 + \eta_i}{4 + 3 \eps + \eta_i + \eps \eta_i} + \frac{ 6 + 2( 3\eps + \eta_i + \eps \eta_i)}{4 + 3 \eps + \eta_i + \eps \eta_i} + 6 p^{\frac{1-C_2}{2}} \\
                      &\le 2 + \frac{\eta_{i}}{4} + \frac{3 \eps + \eta_i + \eps \eta_i}{2}  + 6p^{\frac{1 -C_2}{2}}        \\
                      &= 2 + \left(\frac{3}{4} + \frac{\eps}{2}\right) \eta_i + \frac{3}{2}\eps + 6p^{\frac{1-C_2}{2}}            \\
                      &=2 + \eta_{i+1}
\end{align*}
with probability at least $1 - 2 p^{\frac{1-C_2}{2}}$. Thus, round $i+1$ satisfies property (P2) with probability at least $1 - 2p^{\frac{1 - C_2}{2}}$, provided it satisfies (P1)
and the first $i$ rounds are good.

Our next goal will be to lower bound the probability that (P1) is satisfied, under the assumption that the first $i$ rounds are good. We remark that for the purpose of the following analysis, we condition on the outcome of $Y_{\le i}$ and the edges within it,
but \textit{not} the newly forced vertices $Y_{i+1}$. For each $u \in Y_{ \le i}$, we let $Y_{i+1}(u)$ denote the vertices of $V \setminus Y_{\le i}$ which
are forced by $u$ in round $i+1$. Observe that since we know the value of $\deg_{Y_{\le i}}(u)$ for each $u \in Y_{\le i}$, the random variables $\{Y_{i+1}(u)\}_{u \in Y_{\le i}}$ are independent. Moreover, since we may assume that $\Delta( \Gnp) \le \til{d}_U$, it follows that $|Y_{i+1}(u)| \preceq \Bin( \til{d}_{U}, \deg_{Y_{\le i}}[u]/ \til{d}_{L})$
for all $u \in Y_{\le i}$, and so we may conclude that $|Y_{i+1}| \preceq \sum_{u \in Y_{\le i}} \Bin( \til{d}_{U}, \deg_{Y_{\le i}}[u]/ \til{d}_{L})$ (recall that $\til{d}_{U}:=(1+\omega^{-1})d$ and $\til{d}_{L}=(1-\omega^{-1})d$ are upper and lower bounds on the degrees that hold a.a.s.).
Thus, since $\til{d}_{U}/\til{d}_{L} = 1+o(1)$, 
\begin{align*}
    \mb{E} |Y_{i+1}| &\le (1+o(1)) \sum_{u \in Y_{ \le i}} \deg_{Y_{\le i}}[u]    \\
                     &= (1 +o(1))( |Y_{\le i}| + \sum_{u \in Y_{ \le i}} \deg_{Y_{\le i}}(u)) \\
                     &=  (1+o(1)) (1+\Avg(Y_{\le i}))|Y_{\le i}|                 \\
                     & \le (1+o(1)) (3 + \eta_{i}) |Y_{\le i}|,
\end{align*}
where the last line follows from the assumption that round $i$ satisfies (P2). We may now apply a generalized version of Chernoff's bound
\eqref{chern} to $\sum_{u \in Y_{\le i}} \Bin( \til{d}_{U}, \deg_{Y_{\le i}}[u]/ \til{d}_{L})$ with $\eps = p^{C_1/3}$ to conclude that
$|Y_{i+1}| \le (3 + \eta_i)(1+ \eps) |Y_{\le i}|$ 
with probability at least $1 - \exp(- |Y_{\le i}| p^{2 C_1/3}/3) \ge 1 - \exp(-\Theta( (1/p)^{C_1/3}))$. By Lemma \ref{lemma coupling}, we may in fact assume that $|Y_{i+1}| = (3 + \eta_{i})(1+ \eps) |Y_{\le i}|$,
thus showing that (P1) holds for round $i+1$ with probability at least $1 - \exp(-\Theta( (1/p)^{C_1/3}))$, assuming the previous rounds are good.

Combining the above results, the probability that round $i+1$  is bad \textit{and} the previous rounds are good,
is at most
\[
    2 p^{\frac{1-C_2}{2}} + \exp(-\Theta( (1/p)^{C_1/3})) = o( (t_{F})^{-1}).
\]
In order to complete the argument, we need only bound the probability that one of
the first $t_{F}$ many rounds is bad. Observe,
\begin{eqnarray*}
\mb{P}(\cup_{j=1}^{t_{F}}\text{Round $j$ is bad}) & = & \sum_{j=1}^{t_F} \mb{P}(\text{Round $j$ is bad and the earlier rounds are good}) \\
&\le& t_F \cdot o((t_F)^{-1}) = o(1),
\end{eqnarray*}
and so the proof is complete.
\end{proof}


\section{Proofs of Auxiliary Lemmas}\label{sec:auxiliary}

\begin{proof}[Proof of Lemma~\ref{lem:forcing_independence}]
In what follows, we consider the alternative forcing process in which each edge $(u,v) \in \Gnp$ is
forced with probability $\min\{ 1,\deg_{Y_0}[u]/\til{d}_{L}\}$ for $u \in Y_0$ and $v \in V \setminus Y_0$. Moreover,
we expose the edges with both endpoints in $Y_0$ ahead of time, so that $\deg_{Y_0}[u]$ is known for each $u \in Y_0$ (while we condition on these random edges,
we do not include this in the probability notation for simplicity). For convenience, the following
computations are done assuming $\deg_{Y_0}[u]/\til{d}_{L} <1$ for each $u \in Y_0$.

Let us first fix $1 \le j \le k$ and the subset $S_j:=S_{u_j}$ of $V \setminus Y_0$. Observe that we have 
\[
    \mb{P}( Y_1(u_j) = S_j) = \prod_{w_1 \in S_j} \mb{P}(\text{$u_j$ forces $w_1$}) \prod_{w_2 \in V \setminus Y_0 \cup S_j} \mb{P}( \text{$u_j$ does not force $w_2$}).
\]
Using the independence of $\deg_{Y_0}[u_j]$ and the edges from $Y_0$ into $V \setminus Y_0$, we know that
\[
    \mb{P}(\text{$u_j$ forces $w_1$}) = p \left( \frac{\deg_{Y_0}[u_j]}{\til{d}_{L}} \right),
\]
and
\[
    \mb{P}(\text{$u_j$ does not force $w_2$}) = 1 - p \left( \frac{\deg_{Y_0}[u_j]}{\til{d}_{L}} \right),
\]
for each $w_1 \in S_j$ and $w_2 \in V \setminus Y_0 \cup S_j$. 
Thus,
\[
\mb{P}( Y_1(u_j) = S_j ) = p^{|S_j|} \left( \frac{\deg_{Y_0}[u_j]}{\til{d}_{L}} \right)^{|S_j|} \left(1 - p \left( \frac{\deg_{Y_0}[u_j]}{\til{d}_{L}} \right)\right)^{|V| - |Y_0|-|S_j|}.
\]
On the other hand, clearly the random variables $Y_1(u_1), \ldots ,Y_1(u_k)$ are independent,
as the edges in $Y_0$ have already been exposed. It follows that
\[
    \mb{P}( \cap_{j=1}^{k} \{Y_1(u_j) =S_j\}) = \prod_{j=1}^{k} p^{|S_j|} \left( \frac{\deg_{Y_0}[u_j]}{\til{d}_{L}} \right)^{|S_j|} \left(1 - p \left( \frac{\deg_{Y_0}[u_j]}{\til{d}_{L}} \right)\right)^{|V| - |Y_0|-|S_j|}.
\]

Let us now fix some $u \in Y_0$, together with $v \in V \setminus (Y_0 \cup S_{u})$. We shall consider
the probability that the events $\cap_{j=1}^{k} \{Y_1(u_j)=S_j\}$ and $(u,v) \in \Gnp$ both occur. We first observe that
\begin{align*}
    \mb{P}&(\text{$(u,v) \in \Gnp$ and $\cap_{j=1}^{k} Y_1(u_j) = S_j$}) \\
    &= \mb{P}( \text{$(u,v) \in \Gnp$ and $Y(u)=S_u$}) \, \mb{P}( \cap_{u_j \neq u} Y(u_j) = S_j),
\end{align*}
as the events $\{(u,v) \in \Gnp \} \cap \{Y_1(u)=S_u\}$ and $\cap_{u_j \neq u} \{Y_1(u_j) = S_j\}$ are independent.
A similar argument also shows that

\[
 p \left( 1 - \frac{\deg_{Y_0}[u]}{\til{d}_{L}} \right) p^{|S_u|} \left( \frac{\deg_{Y_0}[u]}{\til{d}_{L}} \right)^{|S_u|} \left(1 - p \left( \frac{\deg_{Y_0}[u]}{\til{d}_{L}} \right)\right)^{|V| - |Y_0|-|S_u| + 1}
\]
corresponds to $\mb{P}( \text{$(u,v) \in \Gnp$ and $Y_1(u)=S_u$})$.
After dividing the above probabilities, it follows that
\[
    \mb{P}( (u,v) \in \Gnp | \cap_{j=1}^{k} Y_1(u_j) =S_j) = p \left( 1 - \frac{ \deg_{Y_0}[u]}{\til{d}_{L}} \right)\left(1 - p\frac{ \deg_{Y_0}[u]}{\til{d}_{L}} \right) \le p.
\]
Thus,
\[
\mb{P}( (u,v) \in \Gnp | \cap_{j=1}^{k} Y_1(u_j)=S_j) \le p.
\]
for each $u \in Y_0$ and $v \in V \setminus Y_0 \cup S_u$.
The second property of equation \ref{eqn:forcing_independence} can be verified by generalizing the above computations to collections of vertices outside of $Y_0$.
\end{proof}

\begin{proof}[Proof of Corollary~\ref{cor:forcing_independence}]
We shall assume, without loss of generality, that $i=0$ in what follows.
Let us suppose that $Y_{0}=\{u_1, \ldots ,u_k\}$ for $k \ge 1$, and we fix $S_{1}, \ldots ,S_{k} \subseteq V \setminus Y_0$
together with $S:= \cup_{j=1}^{k}S_j$.
In all of the following computations, we condition on the events $\{Y_1(u_j)=S_j\}_{j=1}^{k}$
and the random variables $\{\deg_{Y_{0}}[u_j]\}_{j=1}^{k}$. For convenience, we omit these assumptions
from the notation in the following computations.

Fix $1 \le j \le k$, and consider the random variable $e(u_j, S)$ which counts the edges from $u_j$
into $S$, conditioned on the above events. Clearly, $e(u_j,S) = e(u_j, S_j) +e(u_j, S \setminus S_j)$,
where the latter random variables are defined analogously. As $S_j$ consists of the vertices outside of
$B$ which are forced by $u_j$, we know that $e(u_j,S_j)=|S_j|$. On the other hand, we may apply
both statements from Lemma \ref{lem:forcing_independence} to conclude that $e(u_j, S \setminus S_j)$
is stochastically upper bounded by $|S_j|+ \Bin(|S \setminus S_j|, p)$.

Observe now that $e(Y_0,S)= \sum_{i=1}^{k}e(u_j,S)$. As the random variables
$\{ e(u_j,S) \}_{j=1}^{k}$ are conditionally independent of the above events by the second
property of Lemma \ref{lem:forcing_independence}, it
follows that $e(Y_0,S)$ is stochastically upper bounded by $|S| + \sum_{i=1}^{k} \Bin( |S| , p)$.
This final random variable is distributed as $|S|+ \Bin(|S||Y_0|,p)$ and so the claim follows.

If we now take $S^* \subseteq V \setminus (B \cup S)$, then Lemma~\ref{lem:forcing_independence} implies that $e(Y_0,S^*)$ is stochastically upper bounded
by $\Bin(|S^*||Y_0|,p)$. The independence of $e(Y_0,S)$ and $e(Y_0,S^*)$ guaranteed 
by equation \ref{eqn:forcing_independence} ensures that $e(Y_0, S \cup S^*)$ is stochastically
upper bounded by $|S| + \Bin( |Y_0||S| + |Y_0||S^*|), p)$.
\end{proof}

\bibliographystyle{plain}
\bibliography{ref}

 


\end{document}